\newtheorem{theorem}{Theorem}[section]
\newtheorem{lemma}[theorem]{Lemma}
\newtheorem{remark}[theorem]{Remark}
\newcommand{\ep}{\epsilon}
\newcommand{\R}{\mathbb{R}}
\newcommand{\supp}{\mbox{\rm supp\hspace{1pt}}}
\newcommand{\curl}{{\rm curl\hspace{1pt}}}
\newcommand{\Div}{{\rm div\hspace{1pt}}}
\title{Uniqueness and increasing stability in electromagnetic inverse source problems}
\author{Victor Isakov\thanks{Department of Mathematics, Statistics, Physics, Wichita State University, Wichita, KS 67260-0033, USA. Email: victor.isakov@wichita.edu}
\qquad Jenn-Nan Wang\thanks{Institute of Applied Mathematical Sciences, NCTS, National Taiwan University, Taipei 106, Taiwan. Email:jnwang@math.ntu.edu.tw}}
\date{}
\begin{document}

\maketitle
\begin{abstract}
In this paper we study the uniqueness and the increasing stability in the inverse source problem for electromagnetic waves in homogeneous and inhomogeneous media from boundary data at multiple wave numbers. For the unique determination of sources, we consider inhomogeneous media and use tangential components of the electric field and magnetic field at the boundary of the reference domain. The proof relies on the Fourier transform with respect to the wave numbers and the unique continuation theorems. To study the increasing stability in the source identification, we consider homogeneous media and measure the absorbing data or the tangential component of the electric field at the boundary of the reference domain as additional data. By using the Fourier transform with respect to the wave numbers, explicit bounds for analytic continuation, Huygens' principle and bounds for initial boundary value problems, increasing (with larger wave numbers intervals) stability estimate is obtained.  
\end{abstract}

\section{Introduction}\label{se_intro}

The main theme of this paper is to investigate the inverse source problem for the Maxwell equations when the source is supported inside a bounded domain $\Omega$. We consider the scattering solution of the Maxwell equations due to the existence of the source. We measure suitable tangential components of  the electric field and the magnetic field on $\partial\Omega$ or a part of $\Omega$ to retrieve the information of the source. Inverse source problems have enormous applications in practice. For example, detection of submarines and of anomalies in various  industrial objects like material defects \cite{EV}, \cite{GS} can be regarded as recovery of acoustic sources from boundary measurements of the pressure. Other applications include antenna synthesis \cite{BLT2010}, biomedical imaging (magnetoencephalography and ultrasound tomography)  \cite{ABF}, fluorescent microscopy, and geophysics, in particular, to locating sources of earthquakes. 

Inverse source problems are linearisations of inverse problems of determining coefficients of partial differential equations.
From the boundary data for one single linear differential equation or system (that is, single wave number), it is not possible to find the source uniquely \cite[Ch.4]{I}. This non-uniqueness phenomenon also appears in the Maxwell equations due to the existence of non-radiating sources \cite{AM}, \cite{BC}. However, if we use the data collected for various wave numbers in $(0,K)$, the uniqueness can be restored, at least for divergence-free sources. For applications, the important issue is the stability of the source recovery. It is widely known that most of inverse problems for elliptic equations are ill-posed having a feature of logarithmic type stability estimates, which results in  a robust recovery of only few parameters describing the source and yields very low resolution numerically. In this work, we will show that for the Maxwell equations the stability of identifying divergence-free sources using absorbing boundary data on the whole $\partial\Omega$ with wave numbers in $(0,K)$ increases (getting nearly Lipschitz) when $K$ is getting large.

To describe  main results, we will use mostly standard notations. Let $\|\cdot\|_{(l)}$ denote the $H^l$ Sobolev norm of a scalar or a vector-valued functions, $\Omega$ be a bounded domain in ${\mathbb R}^3$ with connected  ${\mathbb R}^3\setminus \bar\Omega$ and the boundary  $\partial \Omega \in C^2$.  $C$ denotes a generic constant depending only on $\Omega, \ep_0, \mu_0$ whose value may vary from line to line. Consider the time-harmonic Maxwell equations in an inhomogeneous medium:
\begin{equation}
\label{maxeq}
\left\{
\begin{aligned}
& \curl E-i\omega \mu H=J_{\mu}\quad\mbox{in}\quad {\mathbf R}^3,\\
& \curl H+i\omega \epsilon E=J_\epsilon\quad\mbox{in}\quad {\mathbf R}^3,
\end{aligned}\right.
\end{equation}
where $E$, $H$ are electric and magnetic fields, $\omega>0$ is the wave number, $\epsilon$ and 
$\mu$ are $3\times 3$ real positive-definite matrices with time independent entries which are positive constants outside $\Omega$, i.e., for some $\ep_0>0, \mu_0>0$
\begin{equation}\label{pos}
\ep(x)=\ep_0 I_3\quad\mbox{and}\quad\mu(x)=\mu_0I_3,\quad x\in\R^3\setminus\bar\Omega,
\end{equation}
and $J_\ep, J_{\mu}$ are the (real vector valued)  electric and magnetic current densities that is assumed to be supported in $\Omega$
\begin{equation}\label{csupp}
\supp J_\ep,\; \supp J_\mu\subseteq\Omega.
\end{equation}
We are interested in the scattering solution for \eqref{maxeq}. In this case, $E,H$ are required to satisfy the Silver-M\"uller radiation condition:
\begin{equation}
\label{rad}
\lim_{|x|\to\infty}|x|(\sqrt{\mu_0}H\times\sigma-\sqrt{\ep_0}E)(x)=0,
\lim_{|x|\to\infty}|x|(\sqrt{\ep_0}E\times\sigma+\sqrt{\mu_0}H)(x)=0
\end{equation} 
where $\sigma=x/|x|$. One can show that for any $J_\ep, J_{\mu}\in H(\mbox{div},\Omega)$ satisfying \eqref{csupp} there exists a unique $(E,H)\in H(\curl,\mathbb  {R}^3)\times H(\curl, \mathbb{R}^3)$ satisfying \eqref{maxeq} and \eqref{rad}, where for any open set $D\subseteq\R^3$ we define $H(\mbox{div},D)=\{{\bf u}\in [L^2(D)]^3:\mbox{div}\,{\bf u}\in L^2(D)\}$ and
$H(\curl,D)=\{{\bf u}\in [L^2(D)]^3:\curl{\bf u}\in [L^2(D)]^3\}$. The corresponding graph norm  of $H(\curl,D)$ is defined by
\begin{equation}
\label{gnorm}
\|{\bf u}\|_{H(\curl,D)}=\left(\|{\bf u}\|^2_{[L^2(D)]^3}+\|\curl{\bf u}\|^2_{[L^2(D)]^3}\right)^{1/2}
\end{equation}
and ${H}_0(\curl,D)$ is the completion of $[C_0^\infty(D)]^3$ with respect to the  norm \eqref{gnorm}.

The first main result is uniqueness from the minimal data 
\begin{equation}
\label{Gamma}
E(\,,\omega)\times\nu,\; H(\, ,\omega)\times\nu\;\; \text{on}\;\; \Gamma \subset \partial \Omega,\; \text{for} \;\; K_\ast<\omega<K,
\end{equation}
where $0\le K_\ast<K$. 

\begin{theorem}\label{thm1}
Let $J_\mu, J_\ep\in H(\curl, \Omega)$ satisfy \eqref{csupp}. We further assume that $\ep,\mu\in C^2(\bar\Omega)$ and there exists a scalar function $\lambda(x)\in C^2(\bar\Omega)$ such that 
\begin{equation}\label{structure}
\ep(x)=\lambda(x)\mu(x),\quad x\in\Omega.
\end{equation}
Moreover, let $J_\ep, J_\mu$ be divergence-free, i.e.,
\begin{equation}
\label{divfree}
{\rm div} J_\ep=0,\;\;{\rm div} J_{\mu}=0\;\;\mbox{\rm in}\;\;\mathbb{R}^3.
\end{equation}
Then $J_\ep, J_{\mu}$ in \eqref{maxeq}, \eqref{rad} are uniquely determined by
\eqref{Gamma}. 
 \end{theorem}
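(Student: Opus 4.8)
By linearity of the scattering solution in $(J_\epsilon,J_\mu)$ and the well-posedness recalled above, it suffices to show that if $(E,H)$ solves \eqref{maxeq}--\eqref{rad} with $J_\epsilon,J_\mu\in H(\curl,\Omega)$ divergence-free and supported in $\Omega$, and
\begin{equation*}
E(\cdot,\omega)\times\nu=H(\cdot,\omega)\times\nu=0\quad\text{on }\Gamma,\qquad K_\ast<\omega<K,
\end{equation*}
then $J_\epsilon=J_\mu=0$. The first step is to remove the restriction to $(K_\ast,K)$. The outgoing solution operator of \eqref{maxeq}--\eqref{rad} is analytic in $\omega$ on $(0,\infty)$ (a standard consequence of analytic Fredholm theory and the absence of real resonances), so $\omega\mapsto(E\times\nu,H\times\nu)|_\Gamma$ is real-analytic and hence vanishes on all of $(0,\infty)$; conjugating \eqref{maxeq}--\eqref{rad} and using that the sources are real gives $E(x,-\omega)=\overline{E(x,\omega)}$ and $H(x,-\omega)=\overline{H(x,\omega)}$, so the data also vanish for $\omega<0$, and for $\omega=0$ by continuity (the divergence-free condition \eqref{divfree} precluding a singularity there).

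Next I would run unique continuation in the exterior. Taking the divergence of \eqref{maxeq} and using \eqref{divfree} yields $\Div(\epsilon E)=\Div(\mu H)=0$ in $\mathbb{R}^3$, so in $\mathbb{R}^3\setminus\bar\Omega$, where $\epsilon=\epsilon_0 I_3$, $\mu=\mu_0 I_3$ and the equations are source-free, both $E$ and $H$ satisfy the constant-coefficient vector Helmholtz equation $\Delta u+\epsilon_0\mu_0\omega^2u=0$. From $E\times\nu=0$ on $\Gamma$ one has $(\curl E)\cdot\nu=\mathrm{curl}_\Gamma(E_{\mathrm{tan}})=0$ there, so by $\curl E=i\omega\mu_0 H$ in the exterior $H\cdot\nu=0$, hence $H|_\Gamma=0$; symmetrically $E|_\Gamma=0$, and then $\curl E|_\Gamma=i\omega\mu_0 H|_\Gamma=0$ together with $\Div E|_\Gamma=0$ forces $\nabla E|_\Gamma=0$. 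Thus $E$, and likewise $H$, has vanishing Cauchy data on $\Gamma$; the classical unique continuation theorem for second-order elliptic equations, together with the connectedness of $\mathbb{R}^3\setminus\bar\Omega$, gives $E(\cdot,\omega)=H(\cdot,\omega)=0$ in $\mathbb{R}^3\setminus\bar\Omega$ for every $\omega$. In particular the fields are supported in $\bar\Omega$, and since their tangential traces are continuous across $\partial\Omega$ we obtain $E\times\nu=H\times\nu=0$ on all of $\partial\Omega$, for every $\omega$.

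Now comes the Fourier transform in the wave number. Putting $\mathbf{E}(x,t)=\frac{1}{2\pi}\int_{\mathbb{R}}E(x,\omega)e^{-i\omega t}\,d\omega$ and likewise $\mathbf{H}$, and noting that $i\omega E$ transforms to $-\partial_t\mathbf{E}$ while the $\omega$-independent sources transform to $J_\mu(x)\delta(t)$ and $J_\epsilon(x)\delta(t)$, the pair $(\mathbf{E},\mathbf{H})$ solves the time-dependent Maxwell system in $\mathbb{R}^3\times\mathbb{R}$ with these impulsive sources. Two facts transfer to the time domain: $(\mathbf{E},\mathbf{H})$ is supported in $\bar\Omega$ in $x$, and it vanishes for $t<0$, because the Silver--M\"uller solution is the retarded one and the time-domain system is hyperbolic with finite propagation speed; in particular $\mathbf{E}\times\nu=\mathbf{H}\times\nu=0$ on $\partial\Omega\times\mathbb{R}$. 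For $t>0$ the fields then solve the source-free time-dependent Maxwell system in $\Omega$ with this over-determined homogeneous lateral condition. Using $\mathbf{E}\times\nu=0$, the energy $\int_\Omega(\epsilon\mathbf{E}\cdot\mathbf{E}+\mu\mathbf{H}\cdot\mathbf{H})\,dx$ is conserved for $t>0$; using in addition $\mathbf{H}\times\nu=0$, a unique continuation theorem (discussed next) shows that $(\mathbf{E},\mathbf{H})$ vanishes in $\Omega$ for all sufficiently large $t$, whence by energy conservation $(\mathbf{E},\mathbf{H})\equiv0$ for all $t>0$. Reading off the initial values at $t=0^+$ then gives $J_\epsilon=-\epsilon\,\mathbf{E}(\cdot,0^+)=0$ and $J_\mu=\mu\,\mathbf{H}(\cdot,0^+)=0$.

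The heart of the matter is this last unique continuation. Since $\epsilon$ and $\mu$ are only $C^2$, Holmgren's theorem is unavailable, and unique continuation is not available in general for anisotropic Maxwell systems --- this is precisely where the structural hypothesis $\epsilon=\lambda\mu$ enters. Under it, together with the relations $\Div(\epsilon E)=\Div(\mu H)=0$, the Maxwell system reduces to a second-order system whose principal part is $\partial_t^2-c(x)^2\Delta$ times the identity plus lower-order terms, and a Carleman-estimate-based unique continuation theorem applies to it. Crucially, $\epsilon$ and $\mu$ are time-independent, hence (trivially) analytic in $t$; this is what licenses continuation across the timelike, non-characteristic surface $\partial\Omega\times\mathbb{R}$ in the spirit of Tataru's theorem, and having the whole time axis at our disposal propagates the vanishing of the lateral Cauchy data into $\Omega$ for large times. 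Assembling these ingredients --- analytic continuation in $\omega$, the exterior Helmholtz unique continuation, the Fourier transform in $\omega$, causality, energy conservation, and the interior Carleman-type unique continuation --- completes the proof; I expect the verification of the last ingredient, in particular making the reduction under $\epsilon=\lambda\mu$ and the associated pseudo-convexity conditions precise, to be the principal technical difficulty.
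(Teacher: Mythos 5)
Your strategy is essentially the paper's: reduce by linearity to showing vanishing data forces vanishing sources, exploit analyticity in $\omega$ to enlarge the frequency window, continue the fields to zero in the exterior, pass to the time domain by Fourier duality, and then invoke a unique continuation theorem for the time-dependent anisotropic Maxwell system --- which is exactly where the hypothesis $\ep=\lambda\mu$ enters (the paper cites Eller's result for this) --- followed by backward uniqueness in the initial--boundary value problem to recover the sources from the data at $t=0$. Two remarks on where the routes diverge. First, for the exterior step the paper does not pass through the vector Helmholtz equation: its Lemma~2.2 shows directly that $E\times\nu=H\times\nu=0$ on $\Gamma$ forces $E=H=0$ on $\Gamma$ pointwise (by a boundary-flattening computation) and then applies the unique continuation theorem of Nguyen--Wang for the Maxwell system itself; your reduction to $\Delta u+\ep_0\mu_0\omega^2u=0$ with full vanishing Cauchy data is a legitimate alternative in the constant-coefficient exterior. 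Second, and more substantively, you run the Fourier transform in the opposite direction: you inverse-transform the frequency-domain solution over all real $\omega$. Since $E(x,\omega)$ grows linearly in $\omega$ (visible from the representation \eqref{intEH} in the homogeneous case), that integral is only distributionally defined, and the claims that the resulting $(\mathbf{E},\mathbf{H})$ is the retarded/causal solution and is supported in $\bar\Omega$ require justification you do not supply. The paper avoids this by starting from the well-posed hyperbolic problem \eqref{maxdyn}--\eqref{initial}, taking the Fourier--Laplace transform with $\Im\omega>\gamma_0$ (where convergence is automatic), and then proving the identification $E=E_*$, $H=H_*$ by a cut-off energy argument exploiting finite propagation speed and the exponential decay of both families; vanishing of $e,h$ on $\partial\Omega\times(0,\infty)$ then follows from uniqueness of the Laplace transform rather than from an inverse Fourier integral. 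This identification is a genuine piece of work in the paper's proof that your outline compresses into one sentence; it is the main point you would need to flesh out, but it does not change the architecture of the argument.
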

 
 Observe that this result implies that $E(\, ,\omega)\times \nu$ on $\partial \Omega$ with $K_\ast<\omega <K$ under the conditions of theorem \ref{thm1} uniquely determines  
 $J_{\ep}, J_{\mu}$ on $\Omega$. Indeed, due to the uniqueness for 
 the exterior boundary value problem for the Maxwell system
 $E(\, ,\omega)\times \nu$ on $\partial \Omega$ uniquely determine
 $(E, H)$ on $\R^3\setminus \Omega$ and hence the data
 \eqref{Gamma} which implies uniqueness of $J_{\ep}, J_{\mu}$ .

The second main result of this paper is an improving stability of recovery of divergence-free sources $J_\ep, J_\mu$ from the \emph{absorbing boundary 
data (also called Leontovich condition)}
\begin{equation}
\label{DEH}
E(\cdot ,\omega)\times\nu-\alpha(\cdot)H_\tau(\cdot,\omega)\;\; \text{on}\;\; \partial \Omega,\;\; \text{for} \;\; 0<\omega<K,
\end{equation}
or the tangential component of the electric field 
\[
E(\, ,\omega)\times\nu\;\; \text{on}\;\; \partial \Omega,\;\; \text{for} \;\; 0<\omega<K,
\]
where $\nu$ is the unit outer normal of $\partial\Omega$ and $H_\tau=H-(H\cdot \nu)\nu$ is the tangential projection of $H$ on $\partial\Omega$. Here we assume that $\alpha(x)\in L^\infty(\partial\Omega)$ and $\alpha(x)\ge c>0$ on $\partial\Omega$. The case of $\alpha\equiv 1$ corresponds to the Silver-M\"uller boundary condition \cite{BH93}. In the next result we assume $\ep=\ep_0, \mu=\mu_0$. 
\begin{theorem}
\label{thm2}
Assume that $1<K$, sources $J_\mu$, $J_\ep$ satisfy \eqref{csupp}, \eqref{divfree}, and
\begin{equation}\label{m0}
\|J_\ep\|^2_{(1)}(\Omega)+\|J_\mu\|^2_{(1)}(\Omega)\le M_1^2
\end{equation}
or
\begin{equation}\label{m1}
\|J_\ep\|^2_{(2)}(\Omega)+\|J_\mu\|^2_{(2)}(\Omega)\le M_2^2
\end{equation}
for some $M_0, M_1>0$. Then there exists $C$, depending on $\mbox{\rm diam}\,\Omega,\ep_0,\mu_0$, such that
 \begin{equation}
\| J_\ep\|^2_{(0)}(\Omega)+\| J_\mu\|^2_{(0)}(\Omega)\leq
C\left(\varepsilon_0^2 +
\frac{M_1^2}{1+K^{\frac 43} {\cal E}_0^{\frac 23}}\right), 
\label{stability}
\end{equation}
or
 \begin{equation}
\| J_\ep\|^2_{(0)}(\Omega)+\| J_\mu\|^2_{(0)}(\Omega)\leq
C\left(\varepsilon_1^2 +
\frac{M_2^2}{1+K^{\frac 43} {\cal E}_1^{\frac 23}}\right), 
\label{stability2}
\end{equation}
for all $ (E,H) \in [H^1(\Omega)]^6$ solving \eqref{maxeq}, \eqref{rad}
where
$$
\varepsilon_0^2=\int_0^K\|E( ,\omega)\times \nu\ -\alpha H_\tau(,\omega)\|^2_{(0)}(\partial\Omega) d\omega,\;\; {\cal E}_0=|\ln\varepsilon_0|,
$$
and
$$
\varepsilon_1^2=\int_0^K\|E( ,\omega)\times \nu\|^2_{(1)}(\partial\Omega) d\omega,\;\; {\cal E}_1=|\ln\varepsilon_1|.
$$
 \end{theorem}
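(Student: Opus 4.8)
The plan is to follow the standard Fourier-transform-in-frequency strategy for increasing-stability source problems, adapted to the Maxwell system. First I would reduce the Maxwell equations to a vector Helmholtz-type problem: applying $\curl$ to the second equation of \eqref{maxeq} and using $\ep=\ep_0,\mu=\mu_0$ together with $\Div J_\ep=0$, one obtains $\Delta E + \ep_0\mu_0\omega^2 E = -i\omega J_\ep - \ep_0^{-1}\curl J_\mu$ (and a symmetric equation for $H$), with $E,H$ satisfying the Silver--M\"uller radiation condition. Thus the scattered fields are given by the outgoing Helmholtz resolvent applied to a frequency-dependent source, and we are in the setting where the Fourier transform with respect to $\omega$ turns the data into information about $\partial_t$-initial data for a wave equation — here the key is that, since $\supp J_\ep,\supp J_\mu\subseteq\Omega$ and $\Omega$ is bounded, Huygens' principle in $\R^3$ makes the corresponding time-domain solution vanish near $\partial\Omega$ for large times, which will be used to control a far-frequency tail.

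Next I would split the argument at a parameter $\rho>1$ to be optimized. For low frequencies $\omega\in(0,\rho)$: using the explicit bound for analytic continuation (a two-constants / Hadamard three-lines type estimate for the entire-in-$\omega$ function obtained from the data, whose growth is controlled by $e^{R|\mathrm{Im}\,\omega|}$ with $R\sim\mathrm{diam}\,\Omega$ by Paley--Wiener), I would propagate the smallness $\varepsilon_0$ (resp. $\varepsilon_1$) of the data on $(0,K)$ to a bound of the form $C\varepsilon_0^\theta$ for the relevant frequency-domain quantity on $(0,\rho)$, with $\theta$ depending on $\rho$ and $K$ — this is the source of the $|\ln\varepsilon_0|$ in ${\cal E}_0$. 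For high frequencies $\omega>\rho$: I would use the bound for the initial boundary value problem / energy estimate for the wave equation together with Huygens' principle and the a priori bound \eqref{m0} (resp. \eqref{m1}) to estimate the tail $\int_\rho^\infty(\cdots)d\omega$ by $C M_1^2/\rho^{s}$ for the appropriate Sobolev gain $s$ coming from \eqref{m0} versus \eqref{m1}. To pass from the boundary data \eqref{DEH} to the full Cauchy data $E\times\nu, H\times\nu$ on $\partial\Omega$ needed to start the analytic-continuation machine, I would invoke the uniqueness/stability for the exterior Maxwell boundary value problem (as already noted in the remark after Theorem \ref{thm1}, and using $\alpha\ge c>0$ to invert the Leontovich relation), and then a Green's-formula / Plancherel identity relating the boundary data at all $\omega$ to $\|J_\ep\|^2_{(0)}+\|J_\mu\|^2_{(0)}$ via the divergence-free condition.

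Balancing the two contributions — choosing $\rho$ so that the analytic-continuation error $\varepsilon_0^{\theta(\rho)}$ matches the tail $M_1^2/\rho^{s}$ — produces the stated form $C\big(\varepsilon_0^2 + M_1^2/(1+K^{4/3}{\cal E}_0^{2/3})\big)$; the exponents $4/3$ and $2/3$ arise from optimizing $\rho\mapsto e^{-c\rho/|\ln\varepsilon_0|}\sim M_1^2\rho^{-s}$ against the admissible range $\rho\lesssim K$, exactly as in the scalar Helmholtz case. The two versions \eqref{stability} and \eqref{stability2} differ only in the Sobolev index used in the a priori bound and in the boundary norm, which changes $s$ and hence which term dominates but not the structure of the argument.

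I expect the main obstacle to be the analytic-continuation step in the vector-valued Maxwell setting: one must produce a scalar (or finite-dimensional) holomorphic function of $\omega\in\mathbb{C}$ whose restriction to the real axis encodes the boundary data, verify the exponential-type growth bound $e^{R|\mathrm{Im}\,\omega|}$ uniformly (this is where Paley--Wiener and the compact support \eqref{csupp} enter), and then quantify the three-lines estimate with explicit constants depending only on $\mathrm{diam}\,\Omega,\ep_0,\mu_0$. A secondary technical point is handling the behavior as $\omega\to0$: the Maxwell resolvent and the reduction to Helmholtz both have low-frequency subtleties, so one must check that the divergence-free hypothesis \eqref{divfree} indeed removes the singular part and that the $\omega=0$ value of the continued function is controlled — this is precisely where the hypothesis $\Div J_\ep=\Div J_\mu=0$ is essential, mirroring its role in the non-uniqueness discussion in the introduction.
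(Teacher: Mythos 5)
Your upper-bound machinery matches the paper's: the boundary data extend to entire functions of $\omega$ via the explicit outgoing kernel $\exp(i\kappa|x-y|)/(4\pi|x-y|)$, the frequency-integrated quantities $\int_0^k\|\cdot\|^2d\omega$ are entire in $k$ with growth $\exp(2d\sqrt{\ep_0\mu_0}|\Im k|)$, a two-constants/harmonic-measure argument in the sector $\{|\arg k|<\pi/4\}$ propagates the smallness from $(0,K)$ to $k>K$, the high-frequency tail is handled by Plancherel, Huygens' principle and Sakamoto-type energy estimates, and the choice $k\sim K^{2/3}{\cal E}_0^{1/3}$ produces the exponents $4/3$ and $2/3$. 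All of that is consistent with Sections 3 and 4 of the paper.

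The genuine gap is in the lower bound, i.e.\ the step that converts control of the boundary data into control of $\|J_\ep\|^2_{(0)}+\|J_\mu\|^2_{(0)}$. You propose to recover the full Cauchy data $E\times\nu$, $H\times\nu$ from the Leontovich combination by solving an exterior problem and then to use ``a Green's-formula / Plancherel identity'' relating the boundary data to the $L^2$ norm of the sources. Neither half works as stated: inverting the exterior map frequency-by-frequency introduces constants that are not uniform in $\omega$ (and is unnecessary), and Green's formula gives identities or upper bounds for boundary integrals, not the needed inequality $\|J_\ep\|^2_{(0)}+\|J_\mu\|^2_{(0)}\le C\,(\text{boundary data})$. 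The paper's proof instead hinges on an observability estimate for the \emph{time-domain} problem: by Huygens' principle $e=h=0$ on $\Omega$ at time $T=\sqrt{\ep_0\mu_0}\,d$, so $(e,h)$ solves a backward initial boundary value problem with zero data at $t=T$, and the boundary regularity results of Cagnol--Eller (for $e\times\nu-\alpha h_\tau$ in $L^2$) and Eller (for $e\times\nu$ in $H^1$) yield $\|J_\ep\|^2_{(0)}+\|J_\mu\|^2_{(0)}\le C\|e\times\nu-\alpha h_\tau\|^2_{(0)}(\partial\Omega\times(0,T))$, and the analogous bound with $\|e\times\nu\|^2_{(1)}$. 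Only then does Plancherel turn the right-hand side into the full frequency integral that your splitting controls. Without this observability input the argument cannot close. Relatedly, the $H^1$ norm in $\varepsilon_1$ is not a cosmetic change of Sobolev index: the boundary condition $e\times\nu$ alone fails the Kreiss--Sakamoto condition, which is precisely why the tangential-electric-field version requires the stronger norm while the absorbing (dissipative) condition admits an $L^2$ estimate.
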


Observe that the stability bound \eqref{stability} or \eqref{stability2} contain a Lipschitz stable part $C\varepsilon_0^2$ or $C\varepsilon_1^2$ and a conditional logarithmic stable part. This logarithmic part is natural and necessary since we deal with elliptic systems. However with growing $K$ logarithmic part is decreasing and the stable bound is dominated by the Lipschitz part. Before going further, we would like to point out that the divergence-free condition \eqref{divfree} in Theorem~\ref{thm1} and \ref{thm2} is not for the technical reason. It is necessary for the uniqueness of our inverse problem. To see this, let $\varphi, \psi\in C^1(\mathbb{R}^3)$ be supported in $\Omega$ and
$
E=\frac{\nabla\varphi}{i\omega},\;\; H=-\frac{\nabla\psi}{i\omega},
$
then $(E,H)$ satisfies \eqref{maxeq} with $\ep=\mu=1$ and $J_\ep=\nabla\varphi$, $J_\mu=\nabla\psi$.  Such examples provide with non uniqueness to the determination of the source $\nabla\varphi, \nabla\psi$ from $E(,\omega), H( ,\omega)$ given outside $\Omega$.

The determination of a source using multiple frequencies has received a lot of attention in recent years. For the Helmholtz equation, uniqueness and numerical results were obtained in \cite{EV}. First increasing stability results were presented in \cite{BLT2010} for some particular cases. These results were proved by direct spatial Fourier analysis methods. In \cite{CIL}, using a different method involving a temporal Fourier transform, sharp bounds of the analytic continuation to higher wave numbers, and exact observability bounds for associated hyperbolic equations, increasing stability bounds were derived for the three dimensional Helmholtz equation.  Later in \cite{EI} the methods and results of \cite{CIL} are extended to the more complicated case of the two dimensional Helmholtz equation. We would like to point out that in the works mentioned above one uses the complete Cauchy data on $\partial\Omega$ instead of Dirichlet-like data, which is much more realistic. For instance, the common measuring acoustical devise (microphone) registers only pressure, while in seismic one typically collects displacements. Those data only register the Dirichlet boundary value on $\partial\Omega$. It should be mentioned that  in \cite{LY} a spherical $\Omega$ was considered and there was a result on increasing stability from only Dirichlet data on $\partial\Omega$, but the used norm of the data was not the standard norm. It involved the operator of solution of the exterior Dirichlet problem. In the recent preprint \cite{BLZ2018}, some results similar to \cite{LY} are obtained for the elastic and electromagnetic waves. 

The idea in the proof of our increasing stability result in Theorem~\ref{thm2} is motivated by the recent paper by Entekhabi and the first author \cite{EI18}, where  increasing stability bounds are obtained for the acoustic and elastic waves using the most natural Sobolev norms of the Dirichlet type data on an arbitrary domain $\Omega$. 
As in \cite{CIL} and \cite{EI18}, in this work we use the Fourier transform in time to reduce our inverse source problem to identification of the initial data in the time-dependent Maxwell equations by data on the lateral boundary. We derive our increasing stability estimate by using sharp bounds of analytic continuation of the data from $(0,K)$ onto $(0,+\infty)$ given in \cite{CIL} and then subsequently utilized in \cite{EI}, \cite{LY}, \cite{BLZ2018}. A new idea introduced in \cite{EI18} is to make use of the Huygens's principle and known Sakamoto type energy bounds for the corresponding hyperbolic initial boundary value problem (backward in time). These techniques enable them to avoid a need in the complete Cauchy data on $\partial\Omega$ and in a direct use of the exact boundary controllability results. For time-dependent Maxwell equations in homogeneous media, the Huygens' principle is valid. On the other hand, in our problem, in addition to Sakamoto type energy bounds, we also need the regularity estimate for the Maxwell equations with absorbing boundary condition or the tangential component of the electric field on the lateral boundary \cite{CE}, \cite{E08}.

 

The rest of this paper is organized as follows. In Section \ref{sec2}, we will prove the uniqueness theorem, Theorem~\ref{thm1}. We prove the increasing stability in Section~\ref{sec3} and \ref{sec4}. In Section~\ref{sec3}, we use the methods of \cite{CIL}, \cite{EI18}, in particular bounds of the analytic continuation of the needed norms of the boundary data from $(0,K)$ onto a sector of the complex plane  $\omega=\omega_1+i\omega_2$, and use them and sharp bounds in \cite{CIL} of the harmonic measure of $(0,K)$ in this sector to derive explicit bounds of the analytic continuation of this norms from $(0,K)$ onto the real axis. In Section~\ref{sec4}, we use the Fourier transform in time to transform the source problem of the time-harmonic Maxwell equations to the time-dependent homogeneous Maxwell equations with initial conditions. The derivation of increasing stability relies on the quantitative analytic continuation established in Section~\ref{sec3}, the Huygens' principle for the Maxwell equations in homogeneous media, and the regularity estimates using boundary conditions.   

\section{Proof of uniqueness}\label{sec2}

We first show solvability of the direct scattering problem and analyticity of its solution with respect to the wave number $\omega$.

\begin{theorem}
\label{thm3}
Assume that \eqref{pos}, \eqref{csupp} are satisfied and $J_\ep, J_{\mu}\in H(\mbox{\rm div},\Omega)$. Then there is a unique solution $(E(\, ,\omega), H(\, ,\omega))\in [H_{loc}(\curl,\R^3)]^2$ to the scattering problem  \eqref{maxeq}, \eqref{rad}. This solution has an (complex) analytic with respect to
$\omega=\Re\omega+i\Im\omega$ continuation onto a neighbourhood of the quarter plane $\{0<\Re\omega, 0\leq \Im\omega\}$ which for $0<\Im\omega$ satisfies the equation \eqref{maxeq} and exponentially decays for large $|x|$:
\begin{equation}
\label{expdecay}
|E(x,\omega)|+ |H(x,\omega)|\leq C e^{-C^{-1}|x|}
\end{equation}
with some constant $C$ depending only on $E,H, \omega$.
 \end{theorem}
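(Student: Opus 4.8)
The plan is to handle existence and uniqueness by the standard reduction of the exterior Maxwell system to a bounded domain with a nonlocal boundary condition, and then to obtain the analytic continuation by combining the limiting--absorption picture in the open upper half $\omega$-plane (where the reduced problem is elliptic and the solution decays exponentially) with the analytic Fredholm theorem to cross the positive real axis.

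First I would fix a ball $B=B_R$ with $\bar\Omega\subset B$ and, using \eqref{pos}, write the Stratton--Chu representation of $(E,H)$ in $\R^3\setminus\bar B$ in terms of the tangential traces on $\partial B$ and the free-space Maxwell Green tensor with wave number $k=\omega\sqrt{\ep_0\mu_0}$; this produces the exterior capacity (electric-to-magnetic) operator $\mathcal C_\omega$ on $\partial B$, analytic in $\omega$ on $\{\Re\omega>0\}$ together with a neighbourhood of $\{\Im\omega\ge 0\}$. Eliminating $H=(i\omega\mu)^{-1}(\curl E-J_\mu)$ inside $B$, one is led to the variational problem of finding $E\in H(\curl,B)$ such that
\[
\int_B \mu^{-1}\curl E\cdot\curl\bar F-\omega^2\,\ep E\cdot\bar F\;+\;\big\langle \mathcal C_\omega(\nu\times E),\,\nu\times\bar F\big\rangle_{\partial B}
=\int_B\big(i\omega J_\ep\cdot\bar F+\mu^{-1}J_\mu\cdot\curl\bar F\big)
\]
for all test fields $F\in H(\curl,B)$. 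Splitting the left side (after a Helmholtz decomposition) into a part coercive on divergence-constrained fields and a part that is compact (via the compact embedding of $H(\curl,B)\cap H(\Div,B)$ with a natural trace condition into $[L^2(B)]^3$, where the $C^2$ regularity of $\partial\Omega$, hence of the material interface, is used), one gets an operator family $A(\omega)$, Fredholm of index zero and analytic in $\omega$ on a neighbourhood of the closed quarter plane; the Stratton--Chu formula recovers $(E,H)$ in $\R^3\setminus\bar B$, and the Silver--M\"uller condition \eqref{rad} holds automatically.

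For uniqueness at real $\omega>0$: with $J_\ep=J_\mu=0$, the interior energy identity gives $\Re\int_{\partial B}\nu\cdot(\bar E\times H)=0$, which together with \eqref{rad} forces the vanishing of the electric far field pattern; by Rellich's lemma and analyticity $(E,H)=0$ in $\R^3\setminus\bar\Omega$, hence $\nu\times E=\nu\times H=0$ on $\partial\Omega$, and unique continuation for the Maxwell system propagates this into $\Omega$, so $(E,H)=0$. Thus $A(\omega)$ is invertible for every $\omega\in(0,\infty)$. For $\Im\omega>0$ the term $-\omega^2\ep$ has a definite imaginary part, the sesquilinear form is coercive, so $A(\omega)^{-1}$ exists; moreover the Green tensor then carries the factor $e^{ik|x|}$ with $\Im k>0$, so the exterior representation yields $|E(x,\omega)|+|H(x,\omega)|\le Ce^{-C^{-1}|x|}$, and the field solves \eqref{maxeq} classically there. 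Now apply the analytic Fredholm theorem on a connected open domain $D$ containing the closed quarter plane: since $A(\omega)^{-1}$ exists on $\{\Im\omega>0\}$, the exceptional set $S\subset D$ on which $A(\omega)$ fails to be invertible is discrete, and by the above $S\cap(0,\infty)=\emptyset$, so $S$ is disjoint from the whole quarter plane $\{\Re\omega>0,\Im\omega\ge0\}$; taking around each point of that quarter plane a small disc in $D$ missing the discrete set $S$ and forming their union gives an open neighbourhood $U$ of it on which $\omega\mapsto A(\omega)^{-1}$, hence $\omega\mapsto(E(\cdot,\omega),H(\cdot,\omega))$, is analytic. For $\Im\omega>0$ this continuation coincides with the elliptic solution just described, hence satisfies \eqref{maxeq} and \eqref{expdecay}.

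The main obstacle is crossing the real axis: promoting "there is a unique physical (radiating) solution for each $\omega\in(0,\infty)$'' to "$A(\omega)^{-1}$ exists on a full two-dimensional neighbourhood of $(0,\infty)$.'' This rests on (i) the uniqueness statement, whose only delicate ingredient is unique continuation for the Maxwell system (requiring the coefficients/interface to be regular enough, as guaranteed by the $C^2$ hypothesis), and (ii) arranging the reduction so that the operator family is genuinely \emph{analytic} and Fredholm on a neighbourhood of the closed quarter plane, so that the analytic Fredholm theorem can be invoked. Once this is in place, the exponential decay \eqref{expdecay} and the verification of \eqref{maxeq} for $\Im\omega>0$ are routine consequences of the explicit exterior representation with $\Im k>0$.
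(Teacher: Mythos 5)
Your proposal is correct in outline, but it takes a genuinely different route from the paper. You reduce the scattering problem to a variational problem for $E$ on a ball $B$ via the exterior electric-to-magnetic (capacity) operator $\mathcal C_\omega$ on $\partial B$, obtain a Fredholm family of index zero from a Helmholtz decomposition together with the compact embedding of $H(\curl,B)\cap H(\Div,B)$ into $[L^2(B)]^3$, and then invoke the analytic Fredholm theorem: invertibility for $\Im\omega>0$ plus injectivity on $(0,\infty)$ (energy identity, Rellich, unique continuation) forces the discrete exceptional set to miss the closed quarter plane, giving the analytic continuation globally in one stroke. The paper instead uses the Lax--Phillips method: the candidate solution is the cut-off interpolation \eqref{eq33} between the free-space radiating solution $(\Phi,\Psi)$ for an auxiliary source $J^\ast$ and the solution $(E^\ast,H^\ast)$ of the interior boundary value problem with $E^\ast\times\nu=0$ on $\partial B$, which converts the problem into the equation \eqref{intop}, $J=(I+A(\omega))J^\ast$, with $A(\omega)$ compact on $[H(\Div,B)]^2$; injectivity of $I+A(\omega)$ follows from uniqueness of the scattering problem together with unique solvability of the interior problem, and analyticity is obtained locally in $\omega$ and glued. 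What each approach buys: the paper's auxiliary interior problem has real eigenvalues, so the ball $B$ must be chosen (depending on $\omega$) to avoid them --- this is precisely the purpose of the eigenvalue monotonicity in Lemma~\ref{lem23}, a step your route dispenses with entirely because the radiation condition is built into $\mathcal C_\omega$; in exchange, you must establish the analyticity and mapping/sign properties of $\mathcal C_\omega$ in a complex neighbourhood of $(0,\infty)$ and the compactness of the embedding, which the paper avoids by working directly with explicit volume potentials and the elliptic regularity of \cite{AC14}. Two small points to watch in your write-up: for $\Im\omega>0$ the sesquilinear form is not literally coercive (the curl--curl term does not control $\|E\|_{L^2(B)}$); what you actually need is injectivity, obtained from the sign of $\Im(-\omega^2)\int_B\ep E\cdot\bar E$ together with the matching sign of $\Im\langle\mathcal C_\omega(\nu\times E),\nu\times\bar E\rangle$ --- this is exactly the computation \eqref{iineq}--\eqref{neg} in the paper --- combined with the index-zero Fredholm property; and the unique continuation step requires interior regularity of $\ep,\mu$ (Lemma~\ref{lem22} assumes $C^1$ coefficients and relies on \cite{NW}), which you correctly flag.
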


We first prove a unique result from boundary data. 
\begin{lemma}
\label{lem22}
Assume that $\ep$ and $\mu$ are $C^1(\R^3)$ positive-definite matrix-valued functions. Let $\tilde\Omega$ be a domain in ${\mathbf R}^3$. If $\omega\ne 0$, $\curl E-i\omega \mu H=\curl H+i\omega \epsilon E=0$ on $\tilde\Omega$, and $E\times\nu=H\times\nu=0$ on $\Gamma\subset \partial\tilde\Omega$, then
$E=H=0$ on $\tilde\Omega$. 
\end{lemma}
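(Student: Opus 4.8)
The natural plan is to convert the first order Maxwell system into a second order elliptic one, show that its full Cauchy data vanish on $\Gamma$, and then quote a unique continuation theorem. Since $\mu$ is pointwise invertible, eliminate $H=(i\omega)^{-1}\mu^{-1}\curl E$ from the two equations to get $\curl(\mu^{-1}\curl E)-\omega^2\epsilon E=0$ on $\tilde\Omega$, and symmetrically $\curl(\epsilon^{-1}\curl H)-\omega^2\mu H=0$. Applying the divergence to the Maxwell equations yields the constraints $\Div(\epsilon E)=0$ and $\Div(\mu H)=0$ on $\tilde\Omega$, so $E$ in fact solves $\curl(\mu^{-1}\curl E)-\nabla\,\Div(\epsilon E)-\omega^2\epsilon E=0$, the added term being identically zero. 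Its principal part $\mathcal{L}E:=\curl(\mu^{-1}\curl E)-\nabla\,\Div(\epsilon E)$ is elliptic: the symbol $v\mapsto-\xi\times(\mu^{-1}(\xi\times v))+(\epsilon\xi\cdot v)\,\xi$ splits into a component orthogonal to $\xi$, which vanishes only when $v\parallel\xi$ by positivity of $\mu$, and a component parallel to $\xi$, which is nonzero on $\mathbb{R}\xi$ by positivity of $\epsilon$; hence it is injective for $\xi\neq0$. The analogous statement holds for $H$ with $\epsilon,\mu$ interchanged, and interior elliptic regularity upgrades $E,H$ to $H^2_{\rm loc}(\tilde\Omega)$.

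Next I would check that $E,H$ and all their first derivatives vanish on $\Gamma$. Because $H\times\nu=0$ on $\Gamma$, the quantity $\nu\cdot\curl H$ equals the surface divergence of $H\times\nu$ along $\Gamma$ and hence vanishes there; the equation $\curl H+i\omega\epsilon E=0$ and $\omega\neq0$ then give $\nu\cdot\epsilon E=0$ on $\Gamma$, and combined with $E\times\nu=0$ (so $E\parallel\nu$ there) and $\nu\cdot\epsilon\nu>0$ this forces $E=0$ on $\Gamma$; symmetrically $H=0$ on $\Gamma$. Since $E=0$ on $\Gamma$, only the normal derivative survives in $\curl E$ there, so $\curl E=\nu\times\partial_\nu E$ on $\Gamma$; then $\curl E-i\omega\mu H=0$ with $H=0$ forces $\nu\times\partial_\nu E=0$, i.e. $\partial_\nu E\parallel\nu$ on $\Gamma$, while $\Div(\epsilon E)=0$ reduces on $\Gamma$ to $\nu\cdot\epsilon\,\partial_\nu E=0$, so positivity of $\epsilon$ gives $\partial_\nu E=0$ on $\Gamma$. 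Together with the tangential derivatives, which vanish because $E|_\Gamma=0$, all first derivatives of $E$, and likewise of $H$, vanish on $\Gamma$.

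To finish, extend $\tilde\Omega$ across a relatively open piece of $\Gamma$ to a larger domain $\tilde\Omega'$ and extend $E,H$ by zero; the vanishing of the full Cauchy data on $\Gamma$ ensures that the zero extensions lie in $H^2_{\rm loc}(\tilde\Omega')$ and solve the two second order elliptic equations there, while vanishing on the nonempty open set $\tilde\Omega'\setminus\overline{\tilde\Omega}$. Applying the weak unique continuation property for the second order elliptic equation $\mathcal{L}E=\omega^2\epsilon E$ (and its analogue for $H$) gives $E=H=0$ in a neighbourhood of $\Gamma$ inside $\tilde\Omega$, and a standard chain-of-balls argument through the connected domain $\tilde\Omega$ propagates this to $E\equiv H\equiv0$ on $\tilde\Omega$.

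The crux is the unique continuation step in the \emph{anisotropic} case: $\curl(\mu^{-1}\curl\cdot)$ is not a scalar operator times the identity, so one cannot apply scalar unique continuation to the components of $E$ separately, and the characteristic variety of $\mathcal{L}$ is no longer the light cone; one therefore has to invoke a Carleman estimate or unique continuation theorem established for second order elliptic systems of this Maxwell type (the case of scalar $\epsilon,\mu$, where a suitable substitution makes the principal part diagonal, is classical). Secondary technical points are justifying the zero extension and the elliptic bootstrap at the a priori regularity $H(\curl)$ of the solutions, and using that $\tilde\Omega$ is connected.
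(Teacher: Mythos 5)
Your proposal is correct and its first half is essentially the paper's argument in different clothing: the paper flattens the boundary with a Maxwell-invariant change of coordinates and reads off $E_3'(0)=0$ from the third component of $\curl' H'=-i\omega\ep'E'$, which is exactly your invariant computation $\nu\cdot\curl H=\mathrm{Div}_\Gamma(H\times\nu)=0\Rightarrow\nu\cdot\ep E=0\Rightarrow E=0$ on $\Gamma$ (and symmetrically for $H$). The two proofs diverge at the continuation step. The paper stops at $E=H=0$ on $\Gamma$ and immediately invokes the unique continuation theorem of Nguyen--Wang \cite{NW} for the \emph{first-order} anisotropic Maxwell system; this is economical because for a first-order system the trace of the solution on a noncharacteristic surface already constitutes the full Cauchy data, so no normal derivatives need to be controlled. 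You instead eliminate $H$, pass to the second-order elliptic system $\curl(\mu^{-1}\curl E)-\nabla\Div(\ep E)-\omega^2\ep E=0$, prove the additional fact $\partial_\nu E=\partial_\nu H=0$ on $\Gamma$, extend by zero, and appeal to weak unique continuation for that second-order system. Your extra derivative computation is sound (and is the price of the second-order reduction), and your symbol/ellipticity check is correct. Both routes ultimately rest on citing a nontrivial unique continuation result for anisotropic Maxwell-type operators — you correctly flag this as the crux rather than proving it — so the proposals are of comparable completeness; the paper's choice of the first-order formulation simply shortens the reduction to that citation. One shared implicit point worth noting: both arguments differentiate the traces along $\Gamma$ and evaluate the equations on $\Gamma$, so both tacitly use $C^1$ regularity of $(E,H)$ up to $\Gamma$, not merely in the interior.
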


Before proving this lemma we remind that, as widely known,  the Maxwell equations are invariant under a change of coordinates. To be precise, let the coordinate transform $x\to x'$ and $J=(J_{kl})$ with $J_{kl}=\partial x'_k/\partial x_l$ be the associated Jacobian matrix.  Then in the new coordinates $x'$, we have
\begin{equation*}
\left\{
\begin{aligned}
&\curl' H'=-i\omega\ep' E',\\
&\curl' E'=i\omega \mu' H',
\end{aligned}
\right.
\end{equation*} 
where
\[
E'=(J^T)^{-1}E,\; H'=(J^T)^{-1}H,\; \ep'=\frac{J\ep J^T}{\mbox{det}J},\; \mu'=\frac{J\mu J^T}{\mbox{det}J}.
\] 

We now prove Lemma~\ref{lem22}. 
\begin{proof}

First we observe that by elliptic regularity $(E,H)\in C^1(\tilde\Omega)$.  Let $P\in \Gamma$. We  claim that $E(P)=H(P)=0$. Not losing a generality we assume that $P$ is the origin and $\Gamma$ near $P$ is the graph of the function $x_3=\gamma(x_1,x_2)$ and moreover
$\partial_1\gamma(0)=\partial_2\gamma(0)=0$.  Let the change of coordinates $x\to x'$ be defined by $x_1'=x_1, x_2'=x_2, x_3'=x_3-\gamma(x_1,x_2)$ near $0$. Then we have
\[
J=\begin{pmatrix}1&0&0\\0&1&0\\-\partial_1\gamma&-\partial_2\gamma&1\end{pmatrix}\quad\mbox{and}\quad\mbox{det}J=1.
\]

 In the new coordinates the unit outer normal $\nu'=(0,0,-1)$, $E'\times\nu'=H'\times\nu'=0$  implies
\[
E_1'=E_2'=0,\quad H_1'=H_2'=0\quad\mbox{on}\quad\{x'_3=0\}. 
\]
In particular $\partial'_2H'_1(0)=\partial_1'H_2'(0)=0$, i.e., $\partial_1'H'_2(0)-\partial_2'H_1'(0)=0$. Next from the third component in the equation $curl' H'=i\omega\ep' E'$, we see that
\[
-i\omega\ep'_{33}(0)E_3'(0)=\partial_1'H'_2(0)-\partial_2'H_1'(0)=0
\]
and thus $E_3'(0)=0$. Transforming back to the original coordinates immediately gives $E(0)=0$.  Likewise, we can show that $H(0)=0$. In other words, we can prove that $E=H=0$ on $\Gamma$. We now apply the unique continuation result obtained in \cite{NW} to conclude that $E=H=0$ in $\tilde\Omega$.
\end{proof}

We also need the well-posedness and the regularity of the boundary value problem related to the Maxwell equations
\begin{equation}\label{maxbvp}
\left\{
\begin{aligned}
& \curl E^*-i\omega \mu H^*= J_{\mu}^*\quad\mbox{in}\quad B,\\
& \curl H^*+i\omega \epsilon E^*= J_{\epsilon}^*\quad\mbox{in}\quad B,\\
&E^*\times\nu=0\quad\mbox{on}\quad\partial B,
\end{aligned}\right.
\end{equation}
where $B$ is a ball and the source $J^*=(J_{\mu}^*,J_{\epsilon}^*)\in [L^2(B)]^6$. 

\begin{lemma}\label{lem23}
There exists a discrete set $${\mathcal T}=\{\cdots,\omega_{-2},\omega_{-1},\omega_1,\omega_2,\cdots\}$$ of nonzero real values, where $-\infty\leftarrow\cdots\le\omega_{-2}\le\omega_{-1}\le\omega_1\le\omega_2\le\cdots\rightarrow\infty$, such that for any $\omega\notin{\mathcal T}\cup\{0\}$ there is a unique solution $(E^*(\omega;J^*), H^*(\omega; J^*))$ to \eqref{maxbvp} and $(E^*(\omega; ), H^*(\omega; ))$ is a continuous linear operator from $[L^2(B)]^6$ into $H(\curl, B)^2$ which is analytic in $\omega\in{\mathbb C}\setminus({\mathcal T}\cup\{0\})$. Let $\{\omega_k(B)\}_{k=-\infty}^\infty$ and $\{\omega_k(B')\}_{k=-\infty}^\infty$ denote the discrete sets described above corresponding to balls $B$ and $B'$. Then if $B\subset B'$, then $\omega_k(B')<\omega_k(B)$ if $k>0$ and $\omega_k(B')>\omega_k(B)$ if $k<0$.
\end{lemma}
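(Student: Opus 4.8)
The plan is to treat \eqref{maxbvp} as a spectral problem for a self-adjoint operator with compact resolvent and then read off the three claims---discreteness of the exceptional set $\mathcal T$, analytic dependence of the solution operator on $\omega$, and the monotonicity of the eigenvalues under domain inclusion---from the classical theory. First I would rewrite the first-order system \eqref{maxbvp} in the standard way: eliminating nothing, I introduce the Maxwell operator acting on $(E^*,H^*)$ in the Hilbert space $[L^2(B)]^6$ equipped with the weighted inner product $\langle (E,H),(E',H')\rangle = \int_B (\epsilon E\cdot\overline{E'} + \mu H\cdot\overline{H'})$, with operator $\mathcal M(E,H) = (-i\epsilon^{-1}\curl H,\ i\mu^{-1}\curl E)$ on the domain consisting of $(E,H)\in H(\curl,B)^2$ with $E\times\nu = 0$ on $\partial B$. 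One checks that $\mathcal M$ is self-adjoint with respect to the weighted inner product (the boundary term in the integration by parts vanishes precisely because of the electric boundary condition), and that, modulo the infinite-dimensional kernel coming from gradient fields, its resolvent is compact by the compact embedding results of Weber/Picard for $H(\curl)\cap H(\mathrm{div})$ on Lipschitz domains. Since here $\epsilon = \mu = I$ in $B$ (or at least constant; we only need $C^1$ regularity), this is routine. Then $\omega \in \mathbb C$ is exceptional exactly when $\omega^2$ (up to the constant $\epsilon_0\mu_0$) is a nonzero eigenvalue of $\mathcal M^2$ restricted to the divergence-free part, the system \eqref{maxbvp} decomposes the source into a gradient part (handled explicitly, contributing the $\omega = 0$ obstruction and no others) and a divergence-free part (governed by the spectrum), so the exceptional set $\mathcal T$ is the preimage of a discrete set of positive reals, hence discrete, real, symmetric about $0$, accumulating only at $\pm\infty$, with $0$ removed.

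For the analyticity statement, once $\omega \notin \mathcal T \cup \{0\}$ the solution operator is $(\mathcal M - \omega)^{-1}$ composed with fixed bounded operators, and the resolvent of a fixed operator is an analytic operator-valued function of the spectral parameter on the resolvent set; boundedness $[L^2(B)]^6 \to H(\curl,B)^2$ follows from the a priori estimate obtained by pairing \eqref{maxbvp} with $(E^*,H^*)$ and using the ellipticity away from $\mathcal T$. The only care needed is the gradient part of the source: for that part one solves a Laplace--type problem $\Delta p = \Div J$ and the dependence on $\omega$ is rational with a pole only at $\omega = 0$, which is already excluded. Combining the two parts gives a solution operator analytic on $\mathbb C\setminus(\mathcal T\cup\{0\})$.

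The monotonicity $\omega_k(B') < \omega_k(B)$ for $k>0$ when $B\subset B'$ is the part I expect to be the main obstacle, and it is really a min--max argument. Writing the positive eigenvalues $0 < \omega_1(B)^2 \le \omega_2(B)^2 \le \cdots$ (in suitable units) as the Courant--Fischer values of the quadratic form $(E,H)\mapsto \int_B(|\curl E|^2 + |\curl H|^2)$ over the divergence-free subspace of the form domain with $E\times\nu=0$ on $\partial B$, one wants to compare with the same for $B'$. Extension by zero sends a divergence-free field on $B$ with vanishing tangential trace on $\partial B$ to an admissible field on $B'$ (the tangential-trace condition is exactly what makes the extended $\curl$ have no singular surface contribution, and the divergence-free condition is preserved), and this extension does not change either the numerator or the denominator of the Rayleigh quotient; hence the $k$-dimensional subspaces available for $B$ inject into those for $B'$, giving $\omega_k(B')^2 \le \omega_k(B)^2$. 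The strictness---that the inequality is \emph{strict}---requires a unique continuation argument: if $\omega_k(B') = \omega_k(B)$ then an eigenfield for $B'$ would have to vanish on $B'\setminus \bar B$ and, by Lemma \ref{lem22} applied on the connected set $B'\setminus\bar B$ (using that it touches $\partial B'$ where the field and its tangential trace vanish), would vanish identically, a contradiction. The negative eigenvalues are handled by symmetry $\omega_{-k} = -\omega_k$, which flips the inequality. I would present the self-adjointness and compact-resolvent facts with references (e.g.\ to Picard, Weber, or Leis), carry out the min--max comparison in detail, and invoke Lemma \ref{lem22} for the strictness; the bookkeeping of the gradient/divergence-free splitting is the only genuinely technical point, and everything else is standard spectral theory.
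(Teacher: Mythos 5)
Your overall architecture is the same as the paper's: a self-adjoint Maxwell operator on $[L^2(B)]^6$ with the weighted inner product, the splitting of the source into the kernel (gradient) part, which contributes only the pole at $\omega=0$, and the range part, where the resolvent is analytic off a discrete real spectrum; and a min--max plus unique-continuation argument for the monotonicity. Two points, however, do not go through as written. First, your Courant--Fischer comparison is set up for the full pair $(E,H)$ with the form $\int_B(|\curl E|^2+|\curl H|^2)$, and you claim that extension by zero produces an admissible field on $B'$ because of the tangential trace condition. That is true for $E$ (which lies in $H_0(\curl,B)$), but there is no boundary condition on $H$ in \eqref{maxbvp}; the zero extension of $H$ acquires a surface contribution $\nu\times H\,\delta_{\partial B}$ in its distributional curl and so is not in $H(\curl,B')$ in general. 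The paper sidesteps this by first reducing the first-order eigenvalue problem to the second-order problem $\curl(\mu^{-1}\curl u)=\omega^2\epsilon u$, $u\times\nu=0$, and running the max--min characterization for $u$ alone in the space $V_{0,\epsilon}\subset H_0(\curl,B)$, where zero extension is harmless. You should do the same (or impose the comparison only on the $E$-component form). Also note that $\epsilon,\mu$ are genuinely variable inside $\Omega\subset B$ here, not constant, though this costs nothing.

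The more serious gap is the strictness. You assert that if $\omega_k(B')=\omega_k(B)$ then ``an eigenfield for $B'$ would have to vanish on $B'\setminus\bar B$,'' and then invoke Lemma \ref{lem22}. For $k=1$ this is correct (equality forces the zero-extended minimizer to realize the extremal Rayleigh quotient on $B'$, hence to be an eigenfunction supported in $\bar B$), but for $k\ge 2$ equality of the $k$-th min--max values does not by itself produce any $B'$-eigenfield supported in $\bar B$: the extremal $k$-dimensional subspace of zero extensions need not consist of, or even contain in an obvious way, eigenfunctions of $B'$. This is precisely the step the paper's argument (following Welsh) is designed to supply: one picks $n$ with $\lambda_n(B')<\lambda_k(B')$ (possible since the eigenvalues have finite multiplicity and $\lambda_j\to 0$), interpolates $n$ nested balls $B=B_1\subset\cdots\subset B_n=B'$ so that all $\lambda_k(B_j)$ coincide, takes the corresponding normalized eigenfunctions extended by zero, proves they are linearly independent using the unique continuation of Lemma \ref{lem22}, and then tests the variational characterization of $\lambda_n(B')$ with their $n$-dimensional span to reach the contradiction $\lambda_k(B')\le\lambda_n(B')$. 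Alternatively, you could close your version by proving the standard lemma that a $k$-dimensional subspace on which the Rayleigh quotient is everywhere $\ge\lambda_k(B')$ and whose max--min equals $\lambda_k(B')$ must contain a genuine $\lambda_k(B')$-eigenfunction (intersect it with the orthogonal complement of the first $k-1$ eigenfunctions of $B'$); either way, the step needs an argument, not an assertion.
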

\begin{proof}
We first study the eigenvalue problem
\begin{equation}
\label{uvbvpp}
\left\{
\begin{aligned}
& \curl u-i\omega \mu v=0\quad\mbox{in}\quad B,\\
& \curl v+i\omega \epsilon u=0\quad\mbox{in}\quad B,\\
&  u \times \nu =0 \quad\mbox{on}\quad \partial B.
\end{aligned}\right.
\end{equation}
We can see that the eigenvalue problem \eqref{uvbvpp} is equivalent to the eigenvalue problem for $u$
\begin{equation}\label{ee}
\left\{
\begin{aligned}
&\curl(\mu^{-1}\curl u)=\omega^2 \epsilon u\quad\mbox{in}\quad B,\\
& u \times \nu =0 \quad\mbox{on}\quad \partial B.
\end{aligned}\right.
\end{equation}
For, it is clear that if $\omega\ne 0$ is an eigenvalue of \eqref{uvbvpp}, then $\omega^2$ is an eigenvalue of \eqref{ee}. Conversely, if $\omega^2$ is an eigenvalue of \eqref{ee} with eigenfunction $u$, then setting $v=\mu^{-1}\curl u/i\omega$ gives $\curl u-i\omega\mu v=0$ and $\curl v+i\omega \epsilon u=0$. 

The eigenvalue problem \eqref{ee} was completely analyzed in \cite{KH}. Recall from \cite[Theorem~4.34, page 193]{KH} that there exists an infinite number of positive eigenvalues $\omega^2_k$ with corresponding eigenfunction $u_k\in V_{0,\epsilon}$ to \eqref{ee}, where 
\[
V_{0,\epsilon}=\{u\in H_0(\curl, B) : (\epsilon u,\psi)_{L^2(B)}=0\; \mbox{for all}\; \psi\in H_0(\curl, B),\curl\psi=0\;\mbox{in}\; B\}.
\]
The eigenvalues $\{\omega^2_k>0\}$ have finite multiplicities and tend to infinity as $k\to\infty$. Moreover, $\{u_k\}_{k=1}^\infty$ form a complete orthonormal system of $(V_{0,\epsilon}, (\cdot,\cdot)_{\mu,\epsilon})$, where the inner product
\[
(u,v)_{\mu,\epsilon}=\int_B\mu^{-1}\curl u\cdot\curl\bar vdx+\int_B\epsilon u\cdot\bar vdx.
\]
Consequently, we have the formula
\[
\lambda_k:=\frac{1}{1+\omega_k^2}=(\epsilon u_k,u_k)_{L^2(B)}=\frac{(\epsilon u_k,u_k)_{L^2(B)}}{(u_k,u_k)_{\mu,\epsilon}}.
\]
Note that $\lambda_1\ge\lambda_2\ge\cdots\to 0$. It is not difficult to prove the following variational characterization of $\lambda_k$, that is,
\begin{equation}\label{vp}
\lambda_k=\max_{{\mathcal U}\subset V_{0,\epsilon}, \mbox{\scriptsize dim}\,{\mathcal U}= k}\;\min_{u\in{\mathcal U}, u\ne 0}\frac{(\epsilon u,u)_{L^2(B)}}{(u,u)_{\mu,\epsilon}},\;\; k=1,2,\cdots.
\end{equation}

An easy consequence of \eqref{vp} is that if $B\subset B'$, then 
\begin{equation}\label{wm}
\lambda_k(B)\le\lambda_k(B')\quad\mbox{for each}\;\; k,
\end{equation}
where $\lambda_k(B)=\frac{1}{1+\omega_k^2(B)}$, $\lambda_k(B')=\frac{1}{1+\omega_k^2(B')}$ and $\omega_k^2(B)$, $\omega^2_k(B')$ are eigenvalues of \eqref{ee} corresponding to $B$ and $B'$, respectively. We actually want to show that the strict monotonicity holds, i.e., for each $k$
\begin{equation}\label{mono}
\lambda_k(B)<\lambda_k(B')\;\;\mbox{if}\;\; B\subset B',
\end{equation}
which is equivalent to
\[
\omega^2_k(B)>\omega_k^2(B')\;\;\mbox{if}\;\; B\subset B'.
\]
We adopt the argument from \cite[Theorem~2.3]{W}. We will prove \eqref{mono} by contradiction. Assume that $\lambda_k(B)=\lambda_k(B')$. Since every $\lambda_k(B')$ has finite multiplicity and $\lambda_k(B')\to 0$, there exists $\lambda_n(B')<\lambda_k(B')$ for some $n$. We now partition $B'$ into $n$ balls satisfying
\[
B=B_1\subset B_2\subset\cdots\subset B_n=B'.
\]
Then \eqref{wm} implies
\[
\lambda_k(B)=\lambda_k(B_1)\le\lambda_k(B_2)\le\cdots\le\lambda_k(B_n)=\lambda_k(B').
\]
Denote $u_{k,j}$ the eigenfunction corresponding to $\lambda_k(B_j)$ with $\|u_{k,j}\|_{\mu,\epsilon}=1$, $j=1,2,\cdots,n$. To abuse the notation, we also use $u_{k,j}$ to denote the zero extension of $u_{k,j}$ originally defined on $B_j$ to $B'$. Still, we have $\|u_{k,j}\|_{\mu,\epsilon}=1$ with integral evaluated over $B'$. 

Now we would like to show that $\{u_{k,j}\}_{j=1}^n$ are linearly independent. Assume that $\sum_{j=1}^na_ju_{k,j}=0$ in $B'$, but $a_n\ne 0$, then $u_{k,n}=0$ in $B'\setminus B_{n-1}$. By the unique continuation property in Lemma~\ref{lem22}, we have that $u_{k,n}\equiv 0$ in $B'$, which is a contradiction. Other coefficients are treated similarly. Considering the subspace spanned by $\{u_{k,j}\}_{j=1}^n$ in the variational characterization of $\lambda_n(B')$ in \eqref{vp}, we obtain that $\lambda_k(B')\le\lambda_n(B')$, which is a contradiction.

To show the unique solvability of \eqref{maxbvp} for $\omega\not\in{\mathcal T}\cup\{0\}$, we consider the operator $L:{\mathcal D}(L)\to X:= [L^2(B)]^3\times [L^2(B)]^3$ given by
\[
L=\begin{pmatrix}0&-i\mu^{-1}\curl\\i\ep^{-1}\curl&0\end{pmatrix},
\]
where ${\mathcal D}(L)=H_0(\curl,B)\times H(\curl,B)$. It is not hard to check that $L$ is self-adjoint in $X$ with respect to the inner product
\[
\Big{\langle}\begin{pmatrix}{u}_1\\{ u}_2\end{pmatrix},\begin{pmatrix}{v}_1\\{ v}_2\end{pmatrix}\Big{\rangle}=\int_B(\mu{u}_1\cdot{v}_1+\ep{u}_2\cdot{v}_2),
\]
the range of $L$, $\mbox{\rm Ran}(L)$, is closed (see \cite[Corollary 8.10]{L86}).  Also, $X$ admits the orthogonal decomposition
\[
X=\mbox{\rm Ker}(L)\oplus\mbox{\rm Ran}(L).
\]
Let $P$ be the orthogonal projection of $X$ onto $\mbox{\rm Ran}(L)$. 

Let $(J_\mu^\ast, J_\ep^\ast)\in [L^2(B)]^3\times [L^2(B)]^3$, i.e., $F:=(-i\mu^{-1}J_{\mu}^*,-i\ep^{-1}J_{\mu}^*)\in X$, then to solve \eqref{maxbvp}, we consider
\begin{equation*}
(L-\omega)W=F,
\end{equation*}
where $W\in{\cal D}(L)$. If $\omega\not\in {\mathcal T}\cup\{0\}$, then $L-\omega$ is invertible. Hence the solution $W$ is given by
\[
W=(L-\omega)^{-1}PF-\omega^{-1}(I-P)F,
\]
for
\[
(L-\omega)W=(L-\omega)(L-\omega)^{-1}PF-(L-\omega)\omega^{-1}(I-P)F=(I-P)F+PF=F.
\]
Moreover, we can see that the solution $W$ is analytic in $\omega\in{\mathbb C}\setminus({\mathcal T}\cup\{0\})$. 
\end{proof}
\begin{remark}\label{rem11}
When $\ep=\ep_0I_3$ and $\mu=\mu_0I_3$, we denote the corresponding spectrum of $L$ by ${\mathcal T}_0$.
\end{remark}

We now prove Theorem~\ref{thm3}.
\begin{proof}
Let $0<\Re\omega$ and $0\le\Im\omega$. We first establish the uniqueness. In other words, we want to prove that if $(E,H)$ satisfies \eqref{maxeq} with $J_\ep\equiv J_\mu\equiv 0$ and \eqref{rad}, then $E=H=0$ in $\R^3$.  Let $\Omega_0$ be an open set containing $\bar\Omega$ with closure contained in a ball $B$. By the Gauss divergence theorem and the Maxwell equations \eqref{maxeq}, we have that
\[
\int_{\partial B}\nu\times E\cdot\bar H dS=\int_B(\curl E\cdot\bar H-E\cdot\curl\bar H)=\int_B(i\omega\mu H\cdot\bar H-i\bar\omega E\cdot\ep\bar E)
\]
and hence
\begin{equation}\label{iineq}
\Re\int_{\partial B}\nu\times E\cdot\bar H\, dS=-\Im\omega\int_B(\mu H\cdot\bar H+E\cdot\ep\bar E)dx\le 0.
\end{equation}
On the other hand, by \eqref{iineq}, we can see that
\begin{equation}\label{neg}
\begin{aligned}
&\Im(\omega\int_{\partial B}\nu\times E\cdot\curl\bar{E}\, dS)=\Im(\omega\int_{\partial B}\nu\times E\cdot(-i\bar\omega\bar H)\, dS)\\
=&|\omega|^2\Im(-i\int_{\partial B}\nu\times E\cdot\bar H\, dS)=-|\omega|^2\Re\int_{\partial B}\nu\times E\cdot\bar H\, dS\ge 0.
\end{aligned}
\end{equation}
In view of \eqref{neg}, using \cite[Theorem 4.17]{CK13}, we obtain that $E=0$ in $\R^3\setminus B$. Similarly, we can prove that $H=0$ in $\R^3\setminus B$. Combine this and Lemma~\ref{lem22} concludes that $E=H=0$ in $\R^3$.

We will prove the existence by the Lax-Phillips method. Let $\omega\in \{0<\Re\omega, 0\leq \Im\omega\}$ and $\Omega_0$ be an open set containing $\bar\Omega$. 
In view of the strict monotonicity of eigenvalues with respect to the domain proved in Lemma~\ref{lem23}, one can choose a ball $B, \bar \Omega_0 \subset B,$ so that $\omega\notin{\mathcal T}\cup{\mathcal T}_0$. Let $\phi$ be a  cut-off $C^{\infty}({\mathbb R}^3)$ function $\phi$  with $\phi=1$ on $\Omega$ and $\phi=0$ outside of $\Omega_0$. We look for a solution
\begin{equation}
\label{eq33}
\begin{pmatrix}E\\ H\end{pmatrix}=\begin{pmatrix}\Phi\\\Psi\end{pmatrix}-\phi\left(\begin{pmatrix}\Phi\\\Psi\end{pmatrix}-\begin{pmatrix}E^\ast\\H^\ast\end{pmatrix}\right)
\end{equation}
to system \eqref{maxeq}, where $\begin{pmatrix}E^\ast\\H^\ast\end{pmatrix}(\cdot,J^\ast)$ with $J^\ast=\begin{pmatrix}J^\ast_\mu\\J^\ast_\ep\end{pmatrix}$ being a solution to the boundary value problem
\begin{equation}
\label{uvbvp}
\left\{
\begin{aligned}
& \curl E^\ast-i\omega \mu H^\ast=J^\ast_\mu \quad\mbox{in}\quad B,\\
& \curl H^\ast +i\omega \epsilon E^\ast=J^\ast_\epsilon\quad\mbox{in}\quad B,
\\
&  E^\ast \times \nu =0 \quad\mbox{on}\quad \partial B,
\end{aligned}\right.
\end{equation}
and $J^\ast\in [H(\mbox{\rm div},B)]^2$ with $\supp J^\ast\subset B$ will be determined later. Moreover, $\begin{pmatrix}\Phi\\\Psi\end{pmatrix}$ is the solution to
\begin{equation}
\label{homo}
\left\{
\begin{aligned}
& \curl \Phi-i\omega \mu_0 \Psi=J^\ast_{\mu}\quad\mbox{in}\quad\R^3,\\
& \curl \Psi+i\omega \epsilon_0 \Phi=J^\ast_\epsilon\quad\mbox{in}\quad\R^3
\end{aligned}\right.
\end{equation}
satisfying the radiation condition
\begin{equation}\label{rad2}
\lim_{|x|\to\infty}|x|
(\sqrt{\ep_0}\Phi\times\sigma+\sqrt{\mu_0}\Psi) (x)=0,\;
\lim_{|x|\to\infty}|x|
(\sqrt{\mu_0}\Psi\times\sigma-\sqrt{\ep_0}\Phi) (x)=0.
\end{equation}
It is well known (see \cite{C}, p. 78, Theorem 2) that
\begin{equation}\label{intEH*}
\begin{aligned}
\Phi(x,\omega)&=\int_{\Omega}\frac{\exp({i\kappa|x-y|})}{4\pi|x-y|}(i\omega \mu_0 J^*_\ep(y)+\curl J^*_\mu(y))dy,\\
\Psi(x,\omega)&=\int_{\Omega}\frac{\exp({i\kappa|x-y|})}{4\pi|x-y|}(-i
\omega \ep_0 J^*_\mu(y)+\curl J^*_\ep(y))dy, \quad \kappa =\omega\sqrt{\ep_0\mu_0}.
\end{aligned}
\end{equation}

Since $\phi=1$ in $\Omega$, we have 
\[
J_\mu^\ast=J_\mu\quad\mbox{and}\quad J_\ep^\ast=J_\ep\quad\mbox{in}\quad\Omega.
\]
In $\R^3\setminus\bar\Omega$, we have
\[
\begin{aligned}
\curl E-i\omega\mu H&=\curl(\Phi-\phi(\Phi-E^\ast))-i\omega\mu_0(\Psi-\phi(\Psi-H^\ast))\\
&=\curl\Phi-\phi \curl(\Phi-E^\ast)-\nabla\phi\times(\Phi-E^\ast)-i\omega\mu_0(\Psi-\phi(\Psi-H^\ast))\\
&=J^\ast_\mu-\nabla\phi\times(\Phi-E^\ast)-\phi[\curl(\Phi-E^\ast)-i\omega\mu_0(\Psi-H^\ast)]\\
&=J^\ast_\mu-\nabla\phi\times(\Phi-E^\ast)
\end{aligned}
\]
and similarly
\[
\curl H+i\omega\ep E=J^\ast_\ep-\nabla\phi\times (\Psi-H^\ast).
\]
We introduce the operator
\[
A(\omega)J^\ast=\begin{pmatrix}-\nabla\phi\times(\Phi-E^\ast)\\-\nabla\phi\times(\Psi-H^\ast)\end{pmatrix}.
\]
Hence $\begin{pmatrix}E\\ H\end{pmatrix}$ is a scattering solution of \eqref{maxeq} iff
\begin{equation}\label{intop}
J=J^\ast+AJ^\ast.
\end{equation}
Note that $\supp AJ^\ast\subset\Omega_0\setminus\bar\Omega$. 

To prove the existence for \eqref{intop}, we first show that $I+A$ is Fredholm from $[H(\Div,B)]^2$ into itself. It follows from  \cite{AC14} that $\Phi(J^*), \Psi(J^*), E^\ast(J^*), H^\ast(J^*)$ are continuous linear operators from $[H(\Div,B)]^2$ into
$[H^1(B)]^6$. Moreover, by direct calculations,
$$
\Div(\nabla \phi \times  E^\ast) = - \nabla \phi \cdot \curl E^\ast=
- \nabla \phi \cdot (i\omega \mu_0 H^\ast +J^*_{\mu}),
$$
$$
\Div(\nabla \phi \times  \Phi) = - \nabla \phi \cdot  \curl \Phi=
- \nabla \phi \cdot (i\omega \mu_0 \Psi +J^*_{\mu}),
$$
due to \eqref{uvbvp}, since $\mu=\mu_0$ outside $\Omega$ and
$\nabla \phi=0$ on $\Omega$. 
Hence
$$
\Div(\nabla \phi \times  (\Phi - E^\ast)) = 
\nabla \phi \cdot (i\omega \mu_0 (H^\ast - \Psi)).
$$
Similarly,
$$
\Div(\nabla \phi \times  (\Psi - H^\ast)) = 
- \nabla \phi \cdot (i\omega \epsilon_0 (E^\ast - \Phi)).
$$
Summing up, $A$ is a continuous linear operator from
$[H(\Div,B)]^2$ into $[H^1(\Div,B)]^2$, where
$H^1(\Div,B)=\{u\in [H^1(B)]^3: \Div u \in H^1(B)\}$ with the natural norm. Since $H^1(B)$ is compactly embedded into $L^2(B)$, $A$ is compact from $[H(\Div,B)]^2$ into itself.

Now to establish the existence, it suffices to prove the injectivity of  $I+A$. Let $0=J^\ast+AJ^\ast$.  Since $J=0$, by the uniqueness
which was shown at the beginning of the proof, we have $E=H=0$ on $B$  thus from \eqref{eq33}
\[
\begin{pmatrix}\Phi\\\Psi\end{pmatrix}=\phi\left(\begin{pmatrix}\Phi\\\Psi\end{pmatrix}-\begin{pmatrix}E^\ast\\H^\ast\end{pmatrix}\right).
\]
Since $\phi=0$ on $B\setminus \Omega_0$ we have $\Phi=0$ on $\partial B$. Now from  \eqref{uvbvp}, \eqref{homo} we yield 
\begin{equation*}
\left\{
\begin{aligned}
& \curl (\Phi-E^\ast)-i\omega \mu(\Psi-H^\ast)=0\quad\mbox{in}\quad B,\\
& \curl (\Psi-H^\ast)+i\omega \epsilon(\Phi-E^\ast)=0\quad\mbox{in}\quad B,\\
&(\Phi-E^\ast)\times\nu=0\quad\mbox{on}\quad\partial B.
\end{aligned}\right.
\end{equation*}
By the choice of $B$ a solution to this boundary value problem is unique, we get $\Phi-E^\ast=\Psi-H^\ast=0$ on $B$ and hence $\Phi=\Psi=0$, so $AJ^*=0$ and from \eqref{intop} we conclude that  $J^\ast=0$. 

Summing up, the Fredholm operator $I+A(\omega)$ is injective, and hence has the inverse. Since $A(\omega)$ is analytic with respect to $\omega$, so is the inverse and therefore $J^*$. In view of the explicit representation formulas for $\Phi, \Psi$ in \eqref{homo} (see for example \cite[(47)]{C}) and the analyticity of $(E^\ast,H^\ast)$ in $\omega$ proved in Lemma~\ref{lem23}, the analyticity of $(E( ,\omega), H( ,\omega))$ follows. The exponential decay \eqref{expdecay} follows from \eqref{eq33}, \eqref{intEH*}.
\end{proof}

Now we prove Theorem~\ref{thm1}.

\begin{proof}

Due to the linearity it suffices to show that $E\times\nu=H\times\nu=0$ on $\Gamma$, $K_\ast < \omega < K$ implies that $J_\ep=J_{\mu}=0$. Let $(e,h)$ be a solution to the dynamical  initial boundary value problem:
\begin{align}
& \partial_t (\epsilon e)- \curl h = 0,\;
\partial_t (\mu h)+ \curl e=0 \quad \textrm{in} \;\; \mathbb{R}^3 \times (0,\infty), \label{maxdyn}\\
& e=-\sqrt{2\pi}\epsilon^{-1}J_\ep,\; h= -\sqrt{2\pi}{\mu}^{-1}J_{\mu}  \quad \textrm{on}\;\; \mathbb{R}^3 \times \{0\}. \label{initial}
\end{align}
Thanks to \eqref{divfree} and \eqref{maxdyn}, in addition to \eqref{initial}, we have the following compatibility conditions
\begin{equation}\label{compat}
\mbox{\rm div}(\ep e)=0,\;\;\mbox{\rm div}(\mu h)=0\;\;\mbox{in}\;\; \mathbb{R}^3 \times (0,\infty).
\end{equation}
As known, see for example \cite{Fried}, there is a unique solution $(e,h)\in L^{\infty}((0,T); [H(\curl,\mathbb{R}^3)]^6)$ of this problem for any $T>0$ and moreover by using the standard energy estimates, i.e. scalarly multiplying
\eqref{maxdyn} by $(e,h) \exp(-\gamma_0t)$
and integrating by parts over $\mathbb R^3 \times (0,t)$ we have
\begin{equation}\label{ebound}
\| e (t,\cdot)\|_{(0)}(\mathbb{R}^3) + \|h(t,\cdot)\|_{(0)}(\mathbb{R}^3 ) \leq C_0 \exp(\gamma_0 t),
\end{equation}
where positive $\gamma_0$ and $C_0$ might depend on $\epsilon, \mu, J$. Then the following Fourier-Laplace transforms are well defined
\begin{align}
\label{FourierLaplace}
E_*(x,\omega) = \frac{1}{\sqrt{2\pi}} \int_0^{\infty} e(t,x) \exp({i}\omega t) dt,\,
H_*(x,\omega) = \frac{1}{\sqrt{2\pi}} \int_0^{\infty} e(t,x) \exp({i}\omega t) dt
\end{align}
with $\omega=\omega_1+{i}\gamma$, $\gamma_0 < \gamma$.

Approximating $J_\ep$, $J_{\mu}$ by smooth functions and integrating by parts, we have
\begin{align*}
0 &= \int_0^{\infty} (\partial_t (\epsilon e) - \curl h )(t,\cdot) \exp({{i}\omega t}) dt \nonumber \\
 & = -\epsilon e(0,\cdot) -i\omega  \int_0^{\infty}\epsilon e(t,\cdot) \exp({{i}\omega t}) dt -\curl \int_0^{\infty} h(t,\cdot) \exp({{i}\omega t}) dt
\end{align*}
and
\begin{align*}
0 &= \int_0^{\infty} (\partial_t (\mu h) + \curl e )(t,\cdot) \exp({{i}\omega t}) dt \nonumber \\
 & = -\mu h(0,\cdot) -i\omega  \int_0^{\infty}\mu h(t,\cdot) \exp({{i}\omega t}) dt +\curl \int_0^{\infty} e(t,\cdot) \exp({{i}\omega t}) dt
\end{align*}
Hence $(E_*,H_*)$ solves \eqref{maxeq} with $\omega=\omega_1+{i} \gamma$, $\gamma_0 < \gamma$.
In addition, $(E_*,H_*)$ exponentially decays for large $|x|$. Indeed, due to the finite speed of the propagation in the hyperbolic problems we have $e(t,x)=0$ for $x\in \R^3\setminus B(R)$ if $t<\theta R -R_0$ for some $\theta=\theta(\ep,\mu)>0$, where $R_0>0$ satisfies $\bar\Omega\subset B(R_0)$ and $R>R_0$.  Hereafter, $B(R)$ denotes the ball of radius $R$ centered at $0$. Hence from
(\ref{FourierLaplace}), for any $\delta>0$
\begin{align*}
\int_{B(R+4)\setminus B(R)} |E_*|^2 & \leq \int_{B(R+4)\setminus B(R)} \left|\int_{\theta R-R_0}^{+\infty} e(t,x)\exp({i}\omega_1t-(\gamma-\delta)t) \exp({-\delta t}) dt \right|^2 dx \\
& \leq  \int_{B(R+4)\setminus B(R)} \left(\int_{\theta R-R_0}^{+\infty} |e(t,x)|^2\exp(-2(\gamma-\delta)t)dt  \int_{\theta R-R_0}^{+\infty} \exp(-2\delta t) dt \right) dx \\
& \leq C(C_0,\delta,R_0) \exp(-2\delta\theta R) \int_{\theta R-R_0}^{+\infty} \|e(t,\cdot)\|_{(0)}^2(\mathbb{R}^n\setminus B(R)) \exp(-2\gamma_0t)\exp(-2(\gamma-\gamma_0-\delta)t)dt \\
& \leq C(C_0,\delta,R_0) \exp(-2\delta\theta R) \int_{\theta R-R_0}^{+\infty}  \exp(-2(\gamma-\gamma_0-\delta)t)dt \\
& \leq C(C_0,\gamma, \gamma_0,R_0)  \exp(-2\delta \theta R),
\end{align*}
where we have used \eqref{ebound} and  chosen $\delta=\frac{\gamma-\gamma_0}{2}$. The same bound holds for $H_*$, and we yield
\begin{equation}
\label{expEH}  
\int_{B(R+4)\setminus B(R)} (|E_*|^2 + |H_*|^2) \leq 
 C(C_0,\gamma, \gamma_0,R_0)  \exp(-2\delta \theta R)
\end{equation}

By Theorem \ref{thm3} the vector function $(E(\,,\omega), H(\,,\omega))$ solving \eqref{maxeq},\eqref{rad} has a complex analytic extension from $(0,\infty)$ into a neighbourhood in the first quarter plane $\{0<\Re\omega, 0 \leq \Im{\omega}\}$, by the uniqueness of the analytic continuation this extension satisfies \eqref{maxeq},\ref{rad} and decays exponentially as $|x|\rightarrow \infty$ when $0 < \Im{\omega}$, in particular, we have for $E,H$ the bound \eqref{expEH}.  To show that $E=E_*, H=H_*$, we let  $E^0=E_*-E, H^0=H_*-H$. Since $(E,H), (E_*, H_*)$ solve the Maxwell system \eqref{maxeq} we see that $\curl E^0-i\omega\mu H^0=0,
\curl H^0+i\omega\ep E^0=0$ in $\mathbb{R}^3$. Let the cut off $C^1(\mathbb{R}^3)$- function $\varphi=1$ on $B(R)$,
$\varphi=0$ on $\mathbb{R}^3\setminus B(R+1)$, $0\leq \varphi \leq 1$.
 Using the homogeneous Maxwell equations for $(E^0,H^0)$, we obtain
 \begin{equation}
 \label{E0}
 \curl(\varphi E^0)-i\omega \mu \varphi H^0=\nabla\varphi\times E^0,\;
 \curl(\varphi H^0)+i\omega \ep \varphi E^0=\nabla\varphi\times H^0.
\end{equation}
By \eqref{E0}, integrating by parts, and using $\varphi=0$ on $\partial B(R+4)$ imply
\[
\begin{aligned}
0=&\int_{B(R+4)}( \curl(\varphi E^0)\cdot (\varphi \bar H^0)-
(\varphi E^0)\cdot \curl(\varphi \bar H^0))\\
=&\int_{B(R+4)}( i\omega (\varphi \mu H^0)\cdot (\varphi \bar H^0)-
(\varphi E^0)\cdot (i\bar\omega\varphi \ep \bar E^0))\\
&\;+
\int_{B(R+4)\setminus B(R)}(  (\nabla\varphi\times E^0)\cdot (\varphi \bar H^0)-
(\varphi E^0)\cdot (\nabla\varphi \times \bar H^0)).
\end{aligned}
\]
Therefore, taking the real part of the above relation yields
$$
\Im \omega\int_{B(R+4)}\varphi^2 (\ep E^0\cdot  \bar E^0+
\mu H^0\cdot \bar H^0)=
\Re \int_{B(R+4)\setminus B(R)}(  (\nabla\varphi\times E^0)\cdot (\varphi \bar H^0)-
(\varphi E^0)\cdot (\nabla\varphi \times \bar H^0))
$$
and hence from \eqref{expEH} and the exponential decay of
$E,H$ we derive
$$
\int_{B(R)}( E^0\cdot  \bar E^0+
 H^0\cdot \bar H^0) \leq C(C_0,\gamma,\gamma_0,R_0) exp(-\delta \theta R).
$$
Letting $R\rightarrow +\infty$ we conclude that $E^0=H^0=0$
on $\R^3$  and so 
 \begin{equation}
 \label{EE*}
 E(\,,\omega)= E^*(\,,\omega),\;\;
 H(\,,\omega)=H^*(\,,\omega)
 \end{equation}
when $\omega=\omega_1 +{i}\gamma$, $0<\omega_1$, and $\gamma_0 < \gamma$.

By Lemma \ref{lem22},  $E(\, ,\omega)=H(\, ,\omega)=0$ on $\R^3\setminus\Omega$, when $K_\ast < \omega < K$, and due to uniqueness of the analytic continuation with respect to $\omega$ in $\{0<\Re\omega, 0\le\Im\omega\}$, it follows from \eqref{EE*} that $E^\ast(\,,\omega)=H^\ast(\,,\omega)=0$ on $\partial\Omega$ for $\omega=\omega_1 +{i}\gamma$, $0<\omega_1$, $\gamma_0 < \gamma$. Now by the uniqueness in the Fourier-Laplace transform
\eqref{FourierLaplace}, we finally conclude that $e=h=0$ on $\partial \Omega \times (0,\infty)$.
In view of the structure assumption \eqref{structure}, by  the uniqueness of the Cauchy problem for the dynamical Maxwell system \eqref{maxdyn}, \eqref{compat}, we conclude that $e=h=0$ in $\Omega \times \{T_0\}$ for some (large) $T_0$ \cite{E03}.
Since $e=h=0$ in $\partial\Omega\times(0,T_0)$,  by uniqueness in the hyperbolic
(backward) initial boundary value problem with the initial data at $t=T_0$, we have $e=h=0$ in $\Omega \times (0,T_0)$. Hence $e=h=0$ on $\Omega \times \{0\}$ and from \eqref{initial} it follows that $J_\ep=J_\mu=0$. The proof is complete.
\end{proof}

\section{Quantitative analytic continuation}\label{sec3}

We will start preparations for a proof of Theorem~\ref{thm2}. Since $\ep=\ep_0, \mu=\ep_0$, \eqref{maxeq} becomes
\begin{equation}
\label{maxeq1}
\left\{
\begin{aligned}
\curl E-i\omega \mu_0 H&=J_\mu\quad\mbox{in}\quad\R^3,\\
\curl H+i\omega \ep_0 E&=J_\ep\quad\mbox{in}\quad\R^3,
\end{aligned}\right.
\end{equation}
with the radiation condition \eqref{rad} provided $\omega>0$. As in \eqref{intEH*}, these equations and the radiation condition are equivalent to the integral representation 
\begin{equation}\label{intEH}
\begin{aligned}
&E(x,\omega)=
\int_{\Omega}\frac{\exp({i\kappa|x-y|})}{4\pi|x-y|}(i\omega \mu_0 J_\ep(y)+\curl J_\mu(y))dy,\\
&H(x,\omega)=\int_{\Omega}\frac{\exp({i\kappa|x-y|})}{4\pi|x-y|}(-i
\omega \ep_0 J_\mu(y)+\curl J_\ep(y))dy,
\end{aligned}
\end{equation}
where $\kappa =\omega\sqrt{\ep_0\mu_0}$. Moreover, as follows from the standard representation of radiating solutions of the Helmholtz equations, \eqref{intEH} is equivalent
to the Helmholtz equations
\begin{equation}
\label{Helm}
\Delta E+\kappa^2E=-i\omega \mu_0 J_\ep-\curl J_\mu,\;
\Delta H+\kappa^2H=i\omega \ep_0 J_\mu-\curl J_\ep \quad\mbox{in}\quad\R^3
\end{equation}
and the Sommerfeld radiation condition.

In particular, we have 
\begin{equation}\label{te}
E(x,\omega)\times\nu(x)=\int_{\Omega}\frac{\exp({i \kappa |x-y|})}{4\pi|x-y|}(i\omega \mu_0 J_\ep(y)+\curl J_\mu(y))\times\nu(x)dy, \quad x\in\partial\Omega,
\end{equation}
and 
\begin{equation}\label{int}
E(x,\omega)\times\nu(x)-\alpha(x)H_\tau(x,\omega)\\
=\int_{\Omega}\frac{\exp({i \kappa |x-y|})}{4\pi|x-y|}G(x,y,\omega)dy,\quad x\in\partial\Omega,
\end{equation}
where
\[
\begin{aligned}
G(x,y,\omega)=&(i\omega \mu_0 J_\ep(y)+\curl J_\mu(y))\times\nu(x)\\
&-\alpha(x)\{(-i\omega\ep_0 J_\mu(y)+\curl J_\ep(y))-[(-i\omega\ep_0 J_\mu(y)+\curl J_\ep(y))\cdot\nu(x)]\nu(x)\}.
\end{aligned}
\]
Observe that the formulae on the right sides of \eqref{te} and \eqref{int} can be defined even when $\omega<0$. Therefore, for $\omega<0$, we define $E(x,\omega)\times\nu(x)$ and  $E(x,\omega)\times\nu(x)-\alpha(x)H_\tau(x,\omega)$ in terms of formulae \eqref{te} and \eqref{int}, respectively. 

Now it follows from  (\ref{int}) that
\begin{equation}
\label{int0}
\int_{-\infty}^{\infty}\|E\times\nu(,\omega)-\alpha H_\tau(,\omega)\|_{(0)}^2(\partial\Omega)d\omega  = I_0(k) + \int_{k< |\omega|} \|E\times\nu(,\omega)-\alpha H_\tau(,\omega)\|_{(0)}^2(\partial\Omega)d\omega, 
\end{equation}
where $I_0(k)$ is defined as
\begin{equation}
I_0(k) = 2\int_0^k \int_{\partial\Omega}
\left(\int_\Omega 
\frac{\exp({i\kappa|x-y|})}{4\pi|x-y|}G(x,y,\omega) dy\right)\cdot
\left(\int_\Omega 
\frac{\exp({-i\kappa|x-y|})}{4\pi|x-y|} G(x,y,-\omega)  dy\right) d \Gamma(x) d\omega. \label{I0}
\end{equation}
As above, we write
\begin{equation}
\label{int1}
\int_{-\infty}^{\infty}\|E\times\nu(,\omega)\|_{(1)}^2(\partial\Omega)d\omega = I_1(k) + \int_{k< |\omega|}\|E\times\nu(,\omega)\|_{(1)}^2(\partial\Omega)d\omega,
\end{equation}
where
\[
I_1(k)=2\int_0^k \int_{\partial\Omega}\left(|(E\times\nu)(x,\omega)|^2+|\nabla_{\partial\Omega}(E\times\nu)(x,\omega)|^2\right)d\Gamma(x)d\omega.
\]
We observe that $\nabla E$ is viewed as the vector with
9 components ($\partial_j E_k$) and
$\nabla_{\partial\Omega} E$ is the tangential projection of the gradient (the 9 dimensional vector formed of tangential projections of gradients of 3 components of $E$).
We have
$$
|(E\times\nu)(x,\omega)|^2=
(E\times\nu)(x,\omega)\cdot\overline{(E\times\nu)}(x,\omega)=
(E\times\nu)(x,\omega)\cdot(E\times\nu)(x,-\omega),
$$
due to \eqref{te}. Remind that we assume that $J_\mu$ and $J_\ep$ are real-valued. Similarly,
$$
|\nabla_{\partial\Omega}(E\times\nu)(x,\omega)|^2=
\nabla_{\partial\Omega}(E\times\nu)(x,\omega)\cdot
\nabla_{\partial\Omega}(E\times\nu)(x,-\omega),
$$
and hence, again using \eqref{te}, the integrand in
$I_1(k)$ can be extended to an entire analytic function of $\omega$. 
Hence
\begin{equation}
\label{I1}
I_1(k)=2\int_0^k \int_{\partial\Omega}\big{(}
(E\times\nu)(x,\omega)\cdot(E\times\nu)(x,-\omega) +
\nabla_{\partial\Omega}(E\times\nu)(x,\omega)\cdot
\nabla_{\partial\Omega}(E\times\nu)(x,-\omega)\big{)} d\Gamma(x)
d\omega.
\end{equation}
Since (due to \eqref{te}, \eqref{int}, \eqref{I0}, \eqref{I1})  the integrands are entire analytic functions of $\omega=\omega_1+i\omega_2$, we can analytically extend $I_0(k)$ and $I_1(k)$ from $k>0$ to $k\in{\mathbb C}$ and, moreover, the integrals in \eqref{I0}, \eqref{I1} with respect to $\omega$ can be taken over any path joining points $0$ and $k=k_1+ik_2$ of the complex plane. Thus $I_0(k)$ and $I_1(k)$ are entire analytic functions of $k\in{\mathbb C}$.  

Due to the definitions of the norms of the boundary data
\begin{equation}\label{varepsilon}
\varepsilon_0^2 = \frac{I_0(K)}{2}\quad\mbox{and}\quad\varepsilon_1^2=\frac{I_1(K)}{2}.
\end{equation}
 The truncation level $k$ in (\ref{int0}) and \eqref{int1} is important to keep balance between the known data and the unknown information when $k\in [K,\infty)$.

We will need the following elementary estimate for $I_0(k)$.
\begin{lemma}
Let $\supp J_\ep, \supp J_\mu \subset \Omega$, then for $k=k_1+ik_2$
\begin{equation}
\label{boundI0}
|I_0(k)|\leq
C (1+|k|^3) 
(\|J_\ep\|^2_{(1)}(\Omega)+\|J_\mu\|^2_{(1)}(\Omega))
\exp({2 d \sqrt{\ep_0\mu_0}|k_2|}),
\end{equation}
where  $ d= \sup |x-y|$ over $x,y \in \Omega$.
\end{lemma}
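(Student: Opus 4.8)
The plan is to bound $I_0(k)$ by passing through the integral representation \eqref{I0} and estimating the three factors that appear there: the kernel $\exp(i\kappa|x-y|)/(4\pi|x-y|)$, the source term $G(x,y,\omega)$, and the resulting $L^2$-norm over $\partial\Omega$ obtained from integrating over $x$ and $\omega$. First I would observe that for $\omega = \omega_1 + i\omega_2$ with $|\omega|\le|k|$ (and more precisely $|\omega_2|\le|k_2|$ along a suitably chosen path of integration from $0$ to $k$, e.g. the straight segment, on which $|\omega|\le|k|$ and $|\Im\omega|\le|k_2|$), the kernel satisfies
\[
\left|\frac{\exp(i\kappa|x-y|)}{4\pi|x-y|}\right| = \frac{\exp(-\sqrt{\ep_0\mu_0}\,\omega_2\,|x-y|)}{4\pi|x-y|} \le \frac{\exp(d\sqrt{\ep_0\mu_0}\,|k_2|)}{4\pi|x-y|},
\]
and $1/|x-y|$ is integrable over the bounded set $\Omega$ in $\mathbb{R}^3$, so by Cauchy–Schwarz (or Young's convolution inequality) the map $f \mapsto \int_\Omega \frac{\exp(i\kappa|x-y|)}{4\pi|x-y|} f(y)\,dy$ is bounded from $L^2(\Omega)$ to $L^2(\partial\Omega)$ (using the trace/continuity of the Newtonian-type potential, or simply a crude $L^\infty$ bound on $\partial\Omega$ followed by Cauchy–Schwarz in $y$) with operator norm $\le C\exp(d\sqrt{\ep_0\mu_0}|k_2|)$, $C$ depending only on $\Omega$.

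Next I would estimate $G(x,y,\omega)$ itself. From its definition, $G$ is a linear combination, with coefficients bounded by $C(1+|\omega|)\le C(1+|k|)$ (absorbing $\|\alpha\|_{L^\infty(\partial\Omega)}$ into $C$), of the quantities $J_\ep(y)$, $J_\mu(y)$, $\curl J_\mu(y)$, $\curl J_\ep(y)$, all paired with the unit vector $\nu(x)$ or its tangential projection, which contribute only a bounded factor. Hence
\[
\|G(\cdot,\cdot,\omega)\|_{L^2(\partial\Omega\times\Omega)} \le C(1+|k|)\left(\|J_\ep\|_{(1)}(\Omega)+\|J_\mu\|_{(1)}(\Omega)\right),
\]
the $H^1$ norm appearing precisely because of the $\curl$ terms. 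Combining with the kernel bound, for each fixed $\omega$ on the path,
\[
\left\|\int_\Omega \frac{\exp(i\kappa|x-y|)}{4\pi|x-y|}G(x,y,\omega)\,dy\right\|_{(0)}(\partial\Omega) \le C(1+|k|)\exp(d\sqrt{\ep_0\mu_0}|k_2|)\left(\|J_\ep\|_{(1)}(\Omega)+\|J_\mu\|_{(1)}(\Omega)\right).
\]
The same bound holds for the factor with $\omega$ replaced by $-\omega$ (which also lies on a path of comparable length, picking up the same exponential). Then in \eqref{I0} I would apply Cauchy–Schwarz in $x$ to the product of these two factors, obtaining a bound on the inner double integral of the form $C(1+|k|)^2\exp(2d\sqrt{\ep_0\mu_0}|k_2|)(\|J_\ep\|^2_{(1)}+\|J_\mu\|^2_{(1)})$, uniformly in $\omega$ along the path; integrating $d\omega$ over a path of length $\le C|k|$ produces one more factor of $|k|$, giving the asserted $C(1+|k|^3)$ after combining with the constant term (the $1+$ accounts for small $|k|$).

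The main subtlety — not really an obstacle but the point requiring care — is the contour choice: since we want the bound in terms of $k_2 = \Im k$ rather than $|k|$ in the exponential, the path from $0$ to $k$ must be chosen so that $|\Im\omega|$ does not exceed $|k_2|$ along it; the straight segment works since $\Im\omega$ interpolates linearly between $0$ and $k_2$, and along it $|\omega|\le|k|$ as well, so all the per-$\omega$ estimates hold with the stated constants, and the path length is $|k|\le\sqrt{1+|k|^2}\cdot C$. A second minor point is justifying that the crude $L^2(\Omega)\to L^2(\partial\Omega)$ mapping bound for the volume potential is legitimate; this is standard (the kernel restricted to $x\in\partial\Omega$, $y\in\Omega$ is weakly singular, $|x-y|^{-1}$ with $\Omega\subset\mathbb{R}^3$, hence Hilbert–Schmidt or handled by Schur's test), and the constant depends only on $\Omega$. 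Everything else is routine, and the $H^1$ norm of the sources enters exactly and only through the $\curl J_\ep,\curl J_\mu$ terms in $G$.
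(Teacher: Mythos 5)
Your proposal is correct and follows essentially the same route as the paper: parametrize the contour as the straight segment $\omega=ks$, bound the kernel by $\exp(\sqrt{\ep_0\mu_0}|k_2|d)/(4\pi|x-y|)$, apply the Cauchy--Schwarz inequality in $y$ together with $\int_\Omega|x-y|^{-2}\,dy\le C$, and count one power of $|k|$ from the path length plus $|k|^2$ from the $i\omega$ factors in $G$, with the $H^1$ norms entering through the $\curl$ terms. The only cosmetic difference is that you handle the $\omega$ and $-\omega$ factors via Cauchy--Schwarz in $x$, while the paper bounds both by a common pointwise majorant and squares; these are equivalent.
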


\begin{proof}
Using the parametrization $\omega = ks, s \in (0,1)$ in the line integral and the elementary inequality
$|\exp({i\sqrt{\ep_0\mu_0}\omega|x-y|})|\leq \exp({\sqrt{\ep_0\mu_0}|k_2|d })$
it is easy to derive from \eqref{I0}, \eqref{int} that
\begin{align*}
&|I_0(k)|\\
\leq&
C\int_0^1|k| 
\left(\int_{\partial\Omega}
\left(\int_{\Omega}\{|k|s (|J_\ep(y)|+|J_\mu(y)|)+|\curl J_\ep(y)|+|\curl J_\mu(y)|\}
\frac{\exp({\sqrt{\ep_0\mu_0}|k_2| d})}{|x-y|} dy\right)^2
 d \Gamma(x) \right) d s \\
\leq&
C|k|
\int_{\partial\Omega}
\left(\int_{\Omega} \{|k|^2 (|J_\ep(y)|^2+|J_\mu(y)|^2)+|\curl J_\ep(y)|^2+|\curl J_\mu(y)|^2\} dy\right)
\left(\int_{\Omega} \frac{\exp({2\sqrt{\ep_0\mu_0}|k_2|d})}{|x-y|^2} dy\right)
 d \Gamma(x),
\end{align*}
where the Cauchy inequality is used for the integrals with respect to $y$. Since 
$$
\int_{\Omega} \frac{1}{|x-y|^2} dy \leq C,
$$
we yield
$$
|I_0(k)| \leq 
C|k|\left(\int_{\Omega}\{|k|^2(|J_\ep(y)|^2+|J_\mu(y)|^2)+|\nabla J_\ep(y)|^2+|\nabla J_\mu(y)|^2\} dy\right)\exp({2\sqrt{\ep_0\mu_0}|k_2|d}) 
$$
and complete the proof of (\ref{boundI0}).

\end{proof}

\begin{lemma}
Let $\supp J_\ep, \supp J_\mu \subset \Omega$, then for $k=k_1+ik_2$
\begin{equation}
\label{boundI1}
|I_1(k)|\leq
C (1+|k|^3)
(\|J_\ep\|^2_{(2)}(\Omega)+\|J_\mu\|^2_{(2)}(\Omega))
\exp({2\sqrt{\ep_0\mu_0}|k_2|d}),
\end{equation}
where  $ d= \sup |x-y|$ over $x,y \in \Omega$.
\end{lemma}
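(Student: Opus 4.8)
\textbf{Proof plan for the bound \eqref{boundI1}.}

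The plan is to mimic almost verbatim the proof of \eqref{boundI0}, the only difference being that the surface integrand in $I_1(k)$ involves the tangential gradient $\nabla_{\partial\Omega}(E\times\nu)$ in addition to $E\times\nu$ itself, which forces us to differentiate the kernel representation \eqref{te} once more and hence to spend one extra derivative on $J_\ep,J_\mu$. First I would start from the representation \eqref{I1}, parametrize the $\omega$-line integral by $\omega=ks$, $s\in(0,1)$, and pull $|k|$ out front; using the elementary bound $|\exp(i\sqrt{\ep_0\mu_0}\omega|x-y|)|\le\exp(\sqrt{\ep_0\mu_0}|k_2|d)$ on the segment, the factor $\exp(2\sqrt{\ep_0\mu_0}|k_2|d)$ is produced exactly as before.

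The new ingredient is the estimate of $\nabla_{\partial\Omega}(E\times\nu)(x,\omega)$. Since $x\mapsto E(x,\omega)\times\nu(x)$ is given by \eqref{te}, applying a tangential derivative $\nabla_{\partial\Omega}$ in $x$ hits either the smooth factor $\nu(x)$ (harmless, bounded because $\partial\Omega\in C^2$) or the kernel $\exp(i\kappa|x-y|)/(4\pi|x-y|)$. Differentiating the kernel in $x$ yields a term behaving like $|x-y|^{-2}$ times a factor $(1+|\kappa|)\le C(1+|k|)$ coming from $\nabla_x\exp(i\kappa|x-y|)$. To keep the singularity integrable I would integrate by parts in $y$, moving the $x$-derivative of the kernel onto a $y$-derivative of the density $i\omega\mu_0 J_\ep+\curl J_\mu$ (which is legitimate because $\supp J_\ep,\supp J_\mu\subset\Omega$, so no boundary terms on $\partial\Omega$ arise); this trades the $|x-y|^{-2}$ kernel back for a $|x-y|^{-1}$ kernel but acting on $\nabla_y(i\omega\mu_0 J_\ep+\curl J_\mu)$, i.e.\ on second derivatives of $J_\ep,J_\mu$. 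Thus the integrand is controlled by
\[
\frac{1}{|x-y|}\Big(|k|^2(|J_\ep|+|J_\mu|)+|k|(|\nabla J_\ep|+|\nabla J_\mu|)+|\nabla^2 J_\ep|+|\nabla^2 J_\mu|\Big)\exp(\sqrt{\ep_0\mu_0}|k_2|d).
\]

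After that the estimate is routine: apply the Cauchy--Schwarz inequality in $y$ to split off $\int_\Omega|x-y|^{-2}\,dy\le C$, integrate the remaining factor over $\partial\Omega$, and collect powers of $|k|$. The worst power is $|k|^2$ inside the square, which after the outer $|k|$ from the parametrization gives the stated $1+|k|^3$ (more precisely a polynomial of degree $\le 3$ in $|k|$, bounded by $C(1+|k|^3)$), multiplied by $\|J_\ep\|^2_{(2)}(\Omega)+\|J_\mu\|^2_{(2)}(\Omega)$ and by $\exp(2\sqrt{\ep_0\mu_0}|k_2|d)$. The only genuine point requiring care — the ``main obstacle'' — is the integration by parts that removes the non-integrable $|x-y|^{-2}$ singularity produced by differentiating the kernel; once one observes that the compact support of the sources kills the boundary contribution, everything reduces to the same Cauchy--Schwarz-and-count-derivatives bookkeeping as in the proof of \eqref{boundI0}.
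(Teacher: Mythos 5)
Your proposal follows essentially the same route as the paper's proof: bound $\nabla_{\partial\Omega}(E\times\nu)$ by the full gradient of an extension, parametrize the line integral by $\omega=ks$, and --- the one genuinely delicate point --- use $\partial_{x_j}|x-y|=-\partial_{y_j}|x-y|$ together with $\supp J_\ep,\supp J_\mu\subset\Omega$ to integrate by parts in $y$, trading the differentiated kernel for one extra derivative of the densities, after which Cauchy--Schwarz and $\int_\Omega|x-y|^{-2}\,dy\le C$ finish the argument exactly as for $I_0$. One bookkeeping slip: in your displayed pointwise bound the term $|k|^2(|J_\ep|+|J_\mu|)$ should read $|k|(|J_\ep|+|J_\mu|)$ (it comes from the derivative falling on the extended field $\nu(x)$, where the density is $i\omega\mu_0 J_\ep+\curl J_\mu$ with $|\omega|\le|k|$); as written, squaring it and multiplying by the outer $|k|$ from the parametrization would give degree $5$ in $|k|$ rather than the degree $3$ you correctly announce and the lemma requires.
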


\begin{proof}

We $C^1$ extend the vector field $\nu$ from $\partial\Omega$ onto some neighbourhood  $V$ of $\partial\Omega$ and denote this extension again as $\nu$.
Using the parametrization 
$\omega=ks, 0\leq s\leq 1,$ in the integral
\eqref{I1} we have
\begin{equation}\label{int1anal}
\begin{aligned}
&|I_1(k)|\\
=&2|k|\int_0^1 \int_{\partial\Omega}
\left(|(E\times\nu)(x,\omega)||(E\times\nu)(x,-\omega)|+
|\nabla_{\partial\Omega}(E\times\nu)(x,\omega)|| \nabla_{\partial\Omega}(E\times\nu)(x,-\omega)|\right)d\Gamma(x)d s\\
\le&2|k|\int_0^1 \int_{\partial\Omega}
\left(|E\times\nu)(x,\omega)||(E\times\nu)(x,-\omega)|+
|\nabla (E\times\nu)(x,\omega)|| \nabla (E\times\nu)(x,-\omega)|\right)d\Gamma(x)d s
\end{aligned}
\end{equation}
when $k=k_1+ik_2$.  

From \eqref{te}, by  the Cauchy inequality, it follows that
$$
|(E\times\nu)(x,\omega)|^2\leq
C \exp (2\sqrt{\ep_0\mu_0}|k_2|d)
\left(\int_{\Omega} \frac{1}{|x-y|^2} dy\right)\int_{\Omega}(|\omega|^2|J_{\ep}|^2+|\nabla J_{\mu}|^2)(y) dy.
$$

Using
$$
\frac{\partial}{\partial x_j}|x-y|= -\frac{\partial}{\partial y_j}|x-y|
$$
and integrating by parts with respect to $y$ imply
\[
\begin{aligned}
&\frac{\partial}{\partial x_j}\int_{\Omega}
\frac{\exp({i\kappa|x-y|})}{|x-y|}
(i\omega\mu_0J_{\ep}(y)+ \curl J_{\mu}(y))\times \nu(x) dy\\
=&
\int_{\Omega}
\frac{\exp({i\kappa |x-y|})}{|x-y|}
\frac{\partial}{\partial y_j}(i\omega\mu_0 J_{\ep}(y)+ \curl J_{\mu}(y))\times \nu(x) dy\\
&+
\int_{\Omega}
\frac{\exp({i\kappa|x-y|})}{|x-y|}
(i\omega\mu_0J_{\ep}(y)+ \curl J_{\mu}(y))\times \frac{\partial}{\partial x_j}\nu(x) dy.
\end{aligned}
\]
Hence, from \eqref{te} and using the Cauchy inequality for the integrals with respect to $y$, we derive that
\[
\begin{aligned}
&|\nabla (E\times \nu)(x,\omega)|^2\leq C
\exp({2\sqrt{\ep_0\mu_0}|k_2|d})\\
&\left(\int_{\Omega}\frac{1}{|x-y|^2} dy \right)\int_{\Omega} ((1+|k|^2) (
|J_{\ep}|^2+|J_{\mu}|^2 +|\nabla J_{\ep}|^2+|\nabla J_{\mu}|^2)
+(|\nabla^2 J_{\ep}|^2+|\nabla^2 J_{\mu}|^2) dy\\
&\leq
C \exp({2\sqrt{\ep_0\mu_0}|k_2|d}) (|k|^2+1) 
(\|J_{\ep}\|_{(2)}^2(\Omega)+\|J_{\mu}\|_{(2)}^2(\Omega)),
\end{aligned}
\]
and combining with \eqref{int1anal} complete the proof of \eqref{boundI1}.

\end{proof}

The following steps are needed to link the unknown values of $I_0(k)$ for $k\in [K,\infty)$  to the known values $\varepsilon_0,
\varepsilon_1$  in \eqref{varepsilon}. Let $S$ be the sector $\{k: -\frac{\pi}{4} < \arg k < \frac{\pi}{4} \}$ and $\mu(k)$
be the harmonic measure of the interval
$[0,K]$ in $S\setminus [0,K]$. Observe that $|k_2|\leq k_1$
( and hence $|k|\leq 2k_1$) when $k\in S$, so from \eqref{boundI0} we have
 $$
\begin{aligned}
&|I_0(k)\exp({-2(d+1)\sqrt{\ep_0\mu_0}k})|\\
\leq&
C  (1+k_1^3)
(\|J_\ep\|^2_{(1)}(\Omega)+\|J_\mu\|^2_{(1)}(\Omega))
\exp(2\sqrt{\ep_0\mu_0}d k_1)\exp(- 2(d+1)\sqrt{\ep_0\mu_0}k_1)\\
\leq&C((1+k_1^3)M_1^2 \exp(- 2\sqrt{\ep_0\mu_0}k_1)\leq C M_1^2
\end{aligned}
$$
with  generic constants $C$.
Due to \eqref{varepsilon},
$$
|I_0(k)\exp({-2(d+1)\sqrt{\ep_0\mu_0}k})|\leq 2\varepsilon_0^2\;\;\text{when}\;\; k\in [0,K],
$$
so as in \cite[page 55, Theorem 2]{LRS} and \cite[page 67]{I}, we conclude that when $K<k<+\infty$
\begin{equation}
\label{I0omega}
|I_0(k)\exp({-2(d+1)\sqrt{\ep_0\mu_0}k})|\leq
C \varepsilon_0^{2\mu(k)}M_1^2.
\end{equation}
Using \eqref{boundI1} instead of \eqref{boundI0} and carrying out the same computations as above, we can obtain that 
for $K<k<+\infty$
\begin{equation}
\label{I1omega}
|I_1(k)\exp({-2(d+1)\sqrt{\ep_0\mu_0}k})|\leq
C \varepsilon_1^{2\mu(k)}M_2^2.
\end{equation}

We need  the following lower bound of the harmonic measure $\mu(k)$ given in \cite{CIL}, Lemma 2.2.

\begin{lemma}
If $0<k< 2^{\frac{1}{4}}K$, then
\begin{equation*}
\frac{1}{2}\leq \mu(k).
\end{equation*}
On the other hand, if $ 2^{\frac{1}{4}}K < k$, then
\begin{equation}
\label{boundharm2}
\frac{1}{\pi}\left(\left(\frac{k}{K}\right)^4-1\right)^{-\frac{1}{2}} \leq \mu(k).
\end{equation}
\end{lemma}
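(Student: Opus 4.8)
The plan is to obtain $\mu(k)$ in closed form by a short chain of conformal maps and then read off the two inequalities from elementary properties of $\arctan$; this is the classical potential-theoretic computation of Lemma~2.2 in \cite{CIL}.

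First, since harmonic measure is a conformal invariant, I would transport $S\setminus[0,K]$ to a half-plane. The squaring map $k\mapsto z=k^{2}$ is a biholomorphism of the sector $S=\{-\tfrac{\pi}{4}<\arg k<\tfrac{\pi}{4}\}$ onto the open right half-plane $\{\Re z>0\}$; it carries the slit $[0,K]$ onto the slit $[0,K^{2}]$ on the positive real axis and a real point $k>K$ onto $z=k^{2}>K^{2}$. Hence $\mu(k)$ equals the harmonic measure at $k^{2}$ of the two-sided slit $[0,K^{2}]$ in $\{\Re z>0\}\setminus[0,K^{2}]$. Next I would open this slit with $z\mapsto w=\sqrt{z^{2}-K^{4}}$ (principal branch of the square root composed with $z\mapsto z^{2}-K^{4}$). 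A check of the boundary correspondence shows this is a biholomorphism of $\{\Re z>0\}\setminus[0,K^{2}]$ onto $\{\Re w>0\}$ that sends the two sides of the slit $[0,K^{2}]$ onto the boundary segment $[-iK^{2},iK^{2}]$, sends the imaginary axis of the $z$-plane onto the remaining part of the imaginary axis, and sends $z=k^{2}$ to $w=\sqrt{k^{4}-K^{4}}>0$. Therefore $\mu(k)$ equals the harmonic measure of the boundary segment $[-iK^{2},iK^{2}]$ of $\{\Re w>0\}$ as seen from $\sqrt{k^{4}-K^{4}}$, i.e. $1/\pi$ times the angle this segment subtends at $\sqrt{k^{4}-K^{4}}$. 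By symmetry that angle is $2\arctan\!\big(K^{2}/\sqrt{k^{4}-K^{4}}\big)$, so
\[
\mu(k)=\frac{2}{\pi}\,\arctan\frac{K^{2}}{\sqrt{k^{4}-K^{4}}},\qquad k>K,
\]
while for $0<k\le K$ the point lies on $[0,K]$ and $\mu(k)=1$.

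With this formula both claims follow at once. If $0<k<2^{1/4}K$ then (for $K<k$) $k^{4}-K^{4}<K^{4}$, so the argument of $\arctan$ exceeds $1$, giving $\mu(k)>\tfrac{2}{\pi}\cdot\tfrac{\pi}{4}=\tfrac12$; and for $0<k\le K$, $\mu(k)=1\ge\tfrac12$. If $2^{1/4}K<k$, set $t=K^{2}/\sqrt{k^{4}-K^{4}}\in(0,1)$; since $\tfrac{d}{dt}\big(2\arctan t-t\big)=\tfrac{1-t^{2}}{1+t^{2}}\ge0$ on $[0,1]$ and $2\arctan t-t$ vanishes at $t=0$, we get $2\arctan t\ge t$ there, hence $\mu(k)=\tfrac{2}{\pi}\arctan t\ge\tfrac{1}{\pi}t=\tfrac{1}{\pi}\big((k/K)^{4}-1\big)^{-1/2}$, which is \eqref{boundharm2}.

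The computation is routine; the only delicate points are choosing the slit-opening map $z\mapsto\sqrt{z^{2}-K^{4}}$ and keeping careful track of the branch cut so that the boundary correspondence (the slit going to a segment of the imaginary axis) is verified correctly, together with the elementary inequality $2\arctan t\ge t$ on $[0,1]$ used in the large-$k$ regime.
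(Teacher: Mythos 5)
Your proof is correct. The paper itself gives no argument here --- it simply quotes Lemma~2.2 of \cite{CIL} --- and your conformal-mapping derivation is precisely the standard one behind that result: $k\mapsto k^2$ takes the slit sector to the slit right half-plane, $z\mapsto\sqrt{z^2-K^4}$ opens the slit onto the boundary segment $[-iK^2,iK^2]$, and the half-plane angle formula yields $\mu(k)=\tfrac{2}{\pi}\arctan\bigl(K^2/\sqrt{k^4-K^4}\bigr)$ for $k>K$, from which both inequalities follow via $\arctan t>\pi/4$ for $t>1$ and $2\arctan t\ge t$ on $[0,1]$. The branch bookkeeping and the boundary correspondence you describe check out, so your write-up in fact supplies a self-contained proof of a statement the paper only cites.
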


\section{Time-dependent Maxwell and wave equations}\label{sec4}

 Let $e$ be a solution to the initial value problem of the wave equation:
\begin{equation}\label{tw}
\left\{
\begin{aligned}
&\ep_0\mu_0\partial^2_t e-\Delta e=0\quad\mbox{in}\quad\mathbb R^3\times(0,\infty),\\
&e(x,0)=-\frac{\sqrt{2\pi}}{\ep_0}J_\ep,\quad \partial_te(x,0)=\frac{\sqrt{2\pi}}{\ep_0 \mu_0} \curl J_\mu\quad\mbox{on}\quad\mathbb R^3\times\{0\},
\end{aligned}\right.
\end{equation}
and $h$ be a solution to the initial value problem
\begin{equation}\label{th}
\left\{
\begin{aligned}
&\ep_0\mu_0\partial^2_t h-\Delta h=0\quad\mbox{in}\quad\mathbb R^3\times(0,\infty),\\
&h(x,0)=\frac{\sqrt{2\pi}}{\mu_0}J_\mu,\quad \partial_t h(x,0)=\frac{\sqrt{2\pi}}{\ep_0 \mu_0}\curl J_\ep\quad\mbox{on}\quad\mathbb R^3\times\{0\}.
\end{aligned}\right.
\end{equation}
Observe that if $\mbox{\rm div}\, J=0$, then $\mbox{\rm div}\, e=0$ for all $t>0$.

As shown in \cite{CIL}, \eqref{Helm} implies that
\[
E(x,\omega)=\frac{1}{\sqrt{2\pi}}\int_0^{+\infty}e(x,t)\exp({i\omega t})dt,\;\; H(x,\omega)=\frac{1}{\sqrt{2\pi}}\int_0^{+\infty}h(x,t)\exp({i\omega t})dt,
\]
Setting $e(x,t)=h(x,t)=0$ for $t<0$, we can write
\begin{equation}
\label{Ee}
E(x,\omega)=\frac{1}{\sqrt{2\pi}}\int_{-\infty}^{+\infty}e(x,t)\exp({i\omega t})dt,\;\; H(x,\omega)=\frac{1}{\sqrt{2\pi}}\int_{-\infty}^{+\infty}h(x,t)\exp({i\omega t})dt.
\end{equation}.

To proceed, we need to estimate the remainders in \eqref{int0},
\eqref{int1}. We first prove the next result, which is similar to \cite[Lemma~4.1]{CIL} and \cite[Lemma~2.3]{EI18}.

\begin{lemma}
\label{lemma_apriori}
Let $(E,H)$ be the electric and magnetic fields satisfying \eqref{maxeq1}, \eqref{rad} with $\supp J_\ep, \supp J_\mu \subset \Omega$. Then if $J_\ep, J_\mu \in H^1(\Omega)$ satisfy \eqref{m0}, we have 
\begin{equation}
\label{apriori0}
\int_{k<|\omega|}
\|E\times\nu( ,\omega)-\alpha H_\tau(,\omega)\|^2_{(0)}(\partial\Omega) d\omega \leq C k^{-2}(\|J_\ep\|^2_{(1)}(\Omega)+\|J_\mu\|^2_{(1)}(\Omega))\le Ck^{-2}M_1^2.
\end{equation}
On the other hand, if $J_\ep, J_\mu \in H^2(\Omega)$and \eqref{m1} holds, then
\begin{equation}
\label{apriori1}
\int_{k<|\omega|}
\|E\times\nu( ,\omega)\|^2_{(1)}(\partial\Omega) d\omega \leq C k^{-2}(\|J_\ep\|^2_{(2)}(\Omega)+\|J_\mu\|^2_{(2)}(\Omega))\le Ck^{-2}M_2^2.
\end{equation}
\end{lemma}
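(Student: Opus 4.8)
The plan is to exploit the Fourier representation \eqref{Ee}, which writes $E(\cdot,\omega)$ and $H(\cdot,\omega)$ as the temporal Fourier transforms of solutions $e,h$ of the scalar wave equations \eqref{tw}, \eqref{th}. Restricting the time-dependent fields to the boundary $\partial\Omega$ and invoking the Plancherel identity in $\omega$, the tail integral $\int_{k<|\omega|}\|\cdot\|^2\,d\omega$ becomes a high-frequency portion of a time-integrated boundary norm, which can be bounded by the norm of one temporal derivative divided by $k^2$. Concretely, if $u(t)=E(\cdot,t)\times\nu$ (or the full absorbing combination, or together with its tangential gradient), then $\int_{k<|\omega|}|\hat u(\omega)|^2\,d\omega\le k^{-2}\int_{-\infty}^\infty|\widehat{\partial_t u}(\omega)|^2\,d\omega = k^{-2}\int_{-\infty}^\infty|\partial_t u(t)|^2\,dt$ by Plancherel; so it suffices to bound the $L^2$-in-time norm of $\partial_t$ of the relevant boundary trace by $M_1$ (resp. $M_2$).

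The key steps, in order, are: (i) record that Huygens' principle for the $3$D wave equation with compactly supported initial data in $\Omega$ forces $e(\cdot,t)$ and $h(\cdot,t)$ to vanish on $\partial\Omega$ outside a bounded time interval $[0,T_0]$ depending only on $\mathrm{diam}\,\Omega$, $\ep_0\mu_0$ — this is what makes the time integral finite and converts an $L^2(0,T_0)$ bound into the desired estimate; (ii) apply an energy/regularity estimate for the time-dependent Maxwell system with the absorbing boundary condition (Leontovich condition) as in \cite{CE}, \cite{E08}, which bounds $\|E\times\nu-\alpha H_\tau\|$ on $\partial\Omega\times(0,T_0)$ in $H^1$ in time (equivalently controls $\partial_t$ of the trace) by the $H^1(\Omega)$ norm of the initial data, i.e. by $M_1$; for the second estimate, the tangential-component case, use the corresponding higher-order trace regularity estimate controlling $E\times\nu$ in $H^1(\partial\Omega)$ together with one time derivative by the $H^2(\Omega)$ norm of the initial data, i.e. by $M_2$; (iii) combine (i), (ii) with the Plancherel inequality above to obtain \eqref{apriori0} and \eqref{apriori1}, using that the data \eqref{m0}, resp. \eqref{m1}, bound the initial data of $e,h$ in the requisite Sobolev spaces (note $\partial_t e(0)=\frac{\sqrt{2\pi}}{\ep_0\mu_0}\curl J_\mu$ costs one derivative, consistent with needing $H^1$, resp. $H^2$, of $J$).

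The main obstacle I expect is step (ii): producing a clean boundary regularity (hidden-regularity / Sakamoto-type) estimate for the Maxwell system in which the lateral-boundary trace of $E\times\nu-\alpha H_\tau$ (or of $E\times\nu$ in the $H^1(\partial\Omega)$ case), together with its time derivative, is controlled by Sobolev norms of the Cauchy data — and doing so with constants depending only on $\Omega,\ep_0,\mu_0$ rather than on $T$. One must be careful that the absorbing boundary condition is dissipative so the natural energy is non-increasing, giving the $T$-uniform constant, and that differentiating the system in $t$ (legitimate since the medium is time-independent) reduces the $\partial_t$-bound to an energy bound for $(\partial_t e,\partial_t h)$ with Cauchy data one order rougher. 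The tangential-gradient version in \eqref{apriori1} additionally requires commuting tangential derivatives along $\partial\Omega$ with the trace operator, which is where the extra spatial derivative (hence $H^2(\Omega)$ and $M_2$) enters. A secondary, more routine point is to make Huygens' principle quantitative enough to pin down $T_0$ and the support of $e,h$ in terms of $\mathrm{diam}\,\Omega$, so that the time interval and all constants are admissible.
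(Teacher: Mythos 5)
Your proposal follows the paper's proof essentially step for step: Plancherel converts the high-frequency tail into $k^{-2}$ times the $L^2$-in-time norm of $\partial_t$ of the relevant boundary trace, Huygens' principle truncates the time integral to $(0,\sqrt{\ep_0\mu_0}\,d)$, and a Sakamoto-type hidden-regularity trace estimate (applied to $\partial_j e$ as well in the $H^1(\partial\Omega)$ case, whence the extra derivative on $J$) closes the bound. One correction to your step (ii): here $e,h$ solve the Cauchy problem in all of $\R^3$ and satisfy no boundary condition on $\partial\Omega$, so the required bound is the lateral hidden-regularity estimate for the free wave equation (the paper invokes the generalization \cite{I1} of the Sakamoto estimates \cite{S}), not a regularity theory for the Maxwell system with the Leontovich condition; the results of \cite{CE} and \cite{E08} enter only in the reverse (observability-type) inequality of Lemma \ref{lem_observ}, and the worry about $T$-uniform constants is moot since Huygens' principle already fixes $T=\sqrt{\ep_0\mu_0}\,d$ in terms of $\mbox{diam}\,\Omega,\ep_0,\mu_0$.
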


\begin{proof}
We first prove \eqref{apriori0}. By Plancherel's formula, we have that
\begin{equation}\label{318}
\begin{aligned}
&\int_{k<|\omega|}
\|E\times\nu( ,\omega)-\alpha H_\tau(,\omega)\|^2_{(0)}(\partial\Omega)
d\omega\\
\leq& k^{-2}
\int_{k<|\omega|}
\omega^2\|E\times\nu( ,\omega)-\alpha H_\tau(,\omega)\|^2_{(0)}(\partial\Omega)
d\omega \\
\leq&
k^{-2} \int_{\mathbb R}
\omega^2\|E\times\nu( ,\omega)-\alpha H_\tau(,\omega)\|^2_{(0)}(\partial\Omega)
d\omega\\
= & k^{-2}
\int_{\mathbb R}
\|\partial_t e( ,t)\times\nu-\partial_t(\alpha h_\tau(,t))\|^2_{(0)}(\partial\Omega)
dt\\
\le&C k^{-2}
\int_{\mathbb R}
(\|\partial_t e( ,t)\|^2_{(0)}(\partial\Omega)+\|\partial_t h( ,t)\|^2_{(0)}(\partial\Omega))
dt.
\end{aligned}
\end{equation}
Combining the Huygens' principle 
\begin{equation}\label{huygen}
e( ,t)=h( ,t)=0\;\;\mbox{on}\;\;\Omega,\;\;\mbox{when}\;\;\sqrt{\ep_0\mu_0} d<t, 
\end{equation}
and the following estimate
\begin{equation*}
\|e\|^2_{(1)}
(\partial\Omega \times (0,\sqrt{\ep_0\mu_0}d))+\|h\|^2_{(1)}
(\partial\Omega \times (0,\sqrt{\ep_0\mu_0}d)) \leq C(\|J_\ep\|^2_{(1)}(\Omega)+\|J_\mu\|^2_{(1)}(\Omega)),
\end{equation*}
which follows from the generalization \cite{I1} of Sakamoto energy estimates \cite{S} to the transmission problems (see also \cite[(2.31)]{EI18}), \eqref{apriori0} is an easy consequence of \eqref{318}.

To prove \eqref{apriori1}, it follows from the similar argument that
\[
\int_{k<|\omega|}
\|E\times\nu( ,\omega)\|^2_{(1)}(\partial\Omega)
d\omega\le C k^{-2}
\int_{\mathbb R}
(\|\partial_t e( ,t)\|^2_{(0)}(\partial\Omega)+
\|\partial_t \nabla e( ,t)\|^2_{(0)}(\partial\Omega))dt.
\]
By the Huygens's principle and the above generalization of Sakamoto energy estimates applied to $\partial_j e$, we have
\[
\|\partial_t\nabla e\|^2_{(0)}(\partial\Omega\times(0,\sqrt{\ep_0\mu_0}d))\le C(\|J_\ep\|^2_{(2)}(\Omega)+\|J_\mu\|^2_{(2)}(\Omega))
\]
(see \cite[(4.20)]{CIL}). \eqref{apriori1} follows easily. The proof is complete. 

\end{proof}

Now we consider the initial value problem for the time-dependent Maxwell equations
\begin{equation}\label{tM}
\left\{
\begin{aligned}
&\mu_0\partial_t h^*+ \curl e^*=0\quad\mbox{in}\quad\mathbb R^3\times(0,\infty),\\
&\ep_0 \partial_t e^*- \curl h^*=0\quad\mbox{in}\quad\mathbb R^3\times(0,\infty),\\
&h^*( ,0)=\frac{\sqrt{2\pi}}{\mu_0}J_\mu,\quad e^*( ,0)=-\frac{\sqrt{2\pi}}{\ep_0}J_\ep\quad\mbox{on}\quad\mathbb R^3\times\{0\}.
\end{aligned}\right.
\end{equation}
Since $\Div J_\ep=0=\Div J_\mu$, by the uniqueness of the initial value problem, we have $\Div e^*( ,t)=0=\Div h^*( ,t)$. So from \eqref{tM} we obtain
$$
\ep_0\mu_0\partial_t^2 e^*= \mu_0 \curl \partial_t h^*=
-\curl \curl e^*=\Delta e^*
$$
and similarly 
$$
\ep_0\mu_0\partial_t^2 h^*=\Delta h^*.
$$
Also it is easy to see that $e^*, h^*$ satisfy the same initial conditions \eqref{tw}, \eqref{th}.
Using the uniqueness in the initial value problem for the wave equations we derive that
\begin{equation}
\label{ee*}
e=e^*,\; h=h^*.
\end{equation}

From \eqref{ee*}, Maxwell system \eqref{tM}, and \eqref{huygen} we have
\begin{equation*}
\left\{
\begin{aligned}
&\mu_0\partial_t h+ \curl e=0\quad\mbox{in}\quad\Omega\times(0,T),\\
&\ep_0 \partial_t e- \curl h=0\quad\mbox{in}\quad\Omega\times(0,T),\\
&h( ,T)=0,\quad e( ,T)= 0\quad\mbox{on}\quad\Omega,
\end{aligned}\right.
\end{equation*}
where $T=\sqrt{\ep_0\mu_0}d$. For this backward initial value problem, using the estimates of Proposition 1.1 in \cite{CE} (for $e\times\nu-\alpha h_\tau$) or Corollary~1.4 in \cite{E08} (for $e\times\nu$), we can obtain the following key energy bounds.
\begin{lemma}
\label{lem_observ}
Let $(e,h)$ be a solution to \eqref{tM} with $J_\ep, J_\mu\in [L^2(\Omega)]^3$ and $\supp J_\ep, \supp J_\mu\subset \Omega$. Then
\begin{equation}\label{exact0}
\|J_\ep\|^2_{(0)}(\Omega)+\|J_\mu\|^2_{(0)}(\Omega) \leq
C\|e\times\nu-\alpha h_\tau\|^2_{(0)}(\partial\Omega\times (0,T))
\end{equation}
and
\begin{equation}\label{exact1}
\|J_\ep\|^2_{(0)}(\Omega)+\|J_\mu\|^2_{(0)}(\Omega) \leq
C\|e\times\nu\|^2_{(1)}(\partial\Omega\times (0,T)).
\end{equation}
\end{lemma}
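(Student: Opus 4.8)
The plan is to recognize Lemma~\ref{lem_observ} as an \emph{observability (exact controllability) estimate} for the backward-in-time Maxwell system with dissipative (absorbing) or tangential-electric boundary data, and to reduce it to the known results quoted in the excerpt, namely Proposition~1.1 in \cite{CE} and Corollary~1.4 in \cite{E08}. Concretely, I would first invoke \eqref{ee*} to pass from $(e,h)$ solving the wave-equation initial value problems \eqref{tw}, \eqref{th} to $(e^*,h^*)=(e,h)$ solving the genuine first-order Maxwell system \eqref{tM}. Then, using the Huygens' principle \eqref{huygen}, namely $e(\cdot,t)=h(\cdot,t)=0$ on $\Omega$ for $t>T=\sqrt{\ep_0\mu_0}\,d$, together with finite speed of propagation, I would note that on the cylinder $\Omega\times(0,T)$ the pair $(e,h)$ is exactly a solution of the Maxwell system that vanishes at the terminal time $t=T$. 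Reversing time, $(\tilde e(\cdot,t),\tilde h(\cdot,t))=(e(\cdot,T-t),-h(\cdot,T-t))$ (or whichever sign bookkeeping makes \eqref{tM} into the form treated in \cite{CE}, \cite{E08}) is a Maxwell solution on $\Omega\times(0,T)$ with \emph{zero} initial data and with the source/initial energy now carried entirely by the terminal data, which are $(\sqrt{2\pi}\mu_0^{-1}J_\mu,\ -\sqrt{2\pi}\ep_0^{-1}J_\ep)$ at $t=0$ for the original system.

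The second step is to apply the cited energy/observability inequalities. For the absorbing (Leontovich) boundary datum $e\times\nu-\alpha h_\tau$, Proposition~1.1 of \cite{CE} gives a stability estimate bounding the energy of the Maxwell field at any time by the $L^2(\partial\Omega\times(0,T))$ norm of $e\times\nu-\alpha h_\tau$, provided $T$ exceeds the geometric optics time needed to ``see'' all of $\Omega$ from $\partial\Omega$; since $T=\sqrt{\ep_0\mu_0}\,d$ with $d=\operatorname{diam}\Omega$ is exactly this sharp time (the time Huygens' principle gives for the field to leave $\Omega$), the hypothesis is met. Evaluating the controlled energy at $t=0$ and using that the initial energy of \eqref{tM} equals (up to the constants $\ep_0,\mu_0,2\pi$) $\|J_\ep\|^2_{(0)}(\Omega)+\|J_\mu\|^2_{(0)}(\Omega)$ yields \eqref{exact0}. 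For the tangential electric datum $e\times\nu$, I would instead invoke Corollary~1.4 of \cite{E08}, which is the corresponding observability estimate but now in the stronger $H^1$ boundary norm (this loss of one derivative relative to the absorbing case is why \eqref{exact1} carries the $(1)$-norm on the right); the same time $T$ and the same evaluation at $t=0$ give \eqref{exact1}.

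The main obstacle, and the only point requiring genuine care, is checking that the geometric/temporal hypotheses of \cite{CE} and \cite{E08} are actually satisfied with the specific value $T=\sqrt{\ep_0\mu_0}\,d$ used here, and that the boundary conditions in those references match the Leontovich / tangential conditions appearing in \eqref{DEH}. In the homogeneous medium $\ep=\ep_0I_3$, $\mu=\mu_0I_3$ the Maxwell system is hyperbolic with a single characteristic speed $(\ep_0\mu_0)^{-1/2}$, so there are no trapped rays and the geometric control condition holds for any $T>\sqrt{\ep_0\mu_0}\,d$; one should remark that the borderline case $T=\sqrt{\ep_0\mu_0}\,d$ is admissible because the Huygens' principle \eqref{huygen} already guarantees the field is zero on $\Omega$ at $t=T$, so one can apply the cited estimates on a slightly larger interval $(0,T')$, $T'>T$, extending $(e,h)$ by zero, and then restrict. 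I would also note that the sign/orientation discrepancy between \eqref{tM} (with $\mu_0\partial_t h^*+\curl e^*=0$) and the conventions of \cite{CE}, \cite{E08} is immaterial since the observability estimates are invariant under $t\mapsto T-t$ and $h\mapsto -h$. Everything else — Plancherel, the algebra relating initial data to $J_\ep,J_\mu$ — is routine.
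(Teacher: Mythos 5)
Your proposal follows the same route as the paper: use \eqref{ee*} to realize $(e,h)$ as a solution of the first-order Maxwell system \eqref{tM}, use the Huygens' principle \eqref{huygen} to conclude $e(\cdot,T)=h(\cdot,T)=0$ on $\Omega$ with $T=\sqrt{\ep_0\mu_0}\,d$, and then bound the data at $t=0$ (which is, up to the constants $\ep_0,\mu_0,\sqrt{2\pi}$, exactly $J_\ep,J_\mu$) by the boundary traces via Proposition~1.1 of \cite{CE} and Corollary~1.4 of \cite{E08}; this is precisely the paper's citation-level proof. The one place where your justification goes astray is the framing of the cited results as observability/exact-controllability estimates subject to a geometric control condition, with $T=\sqrt{\ep_0\mu_0}\,d$ needing to match some ``geometric optics time.'' That is not the mechanism here, and the paper's introduction is explicit that the point of the \cite{EI18} strategy is to \emph{avoid} exact boundary controllability. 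What is actually used is much softer: once Huygens forces the data at $t=T$ to vanish on $\Omega$, the field on $\Omega\times(0,T)$ solves a backward initial--boundary value problem with zero ``initial'' data at $t=T$ and with prescribed boundary data ($e\times\nu-\alpha h_\tau$, resp.\ $e\times\nu$); the cited results are well-posedness/regularity (Sakamoto-type energy) estimates for that IBVP, which bound the whole solution --- in particular its trace at $t=0$ --- by the boundary data alone. No ray condition or minimal observation time enters beyond the Huygens time, which is needed only to kill the terminal data; your device of enlarging the interval to $(0,T')$, $T'>T$, is therefore unnecessary. Your observation that the $H^1$ norm in \eqref{exact1} reflects a loss of one derivative is correct, and the reason is the one suggested by the title of \cite{E08}: prescribing $e\times\nu$ alone does not satisfy the Kreiss--Sakamoto condition, so the energy estimate costs a derivative of the boundary data. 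With the ``observability'' language replaced by ``well-posedness of the backward IBVP,'' your argument is the paper's argument.
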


Now we are ready to prove the increasing stability results \eqref{stability}, \eqref{stability2} of Theorem \eqref{thm2}. 

\begin{proof}

We will first prove \eqref{stability} by modifying the argument in \cite{CIL} and \cite{EI18}. We may assume that $\varepsilon_0< \frac 12$, otherwise, \eqref{stability} obviously holds. In \eqref{int0} and \eqref{varepsilon}, we choose
\[
\left\{
\begin{aligned}
&k= \delta K^{\frac 23 } {\cal E}_0^{\frac 13}, \delta =(2\pi (d+1)\sqrt{\ep_0\mu_0})^{-\frac 13}\quad\mbox{when}\quad 2^{\frac 34}\delta^{-3}K< {\cal E}_0,\\
&k=K\quad\mbox{when}\quad {\cal E}_0 \leq 2^{\frac 34}\delta^{-3}K.
\end{aligned}\right .
\]
Now if $2^{\frac 34 }\delta^{-3}K< {\cal E}_0$, then $2^{\frac 14 }K<k$, and \eqref{I0omega}, \eqref{boundharm2} imply that
\[
\begin{aligned}
|I_0(k)| &\le CM_1^2\exp(2(d+1)\sqrt{\ep_0\mu_0}k)\exp\left(-\frac{2}{\pi}((\frac kK)^4-1)^{-\frac 12} {\cal E}_0\right)\\
&\leq CM_1^2 \exp(2(d+1)\sqrt{\ep_0\mu_0}k)\exp\left(-\frac{2}{\pi}(\frac Kk)^2 {\cal E}_0\right)\\
&=CM_1^2 \exp \left(2(d+1)\sqrt{\ep_0\mu_0}k-\frac{2}{\pi}\delta \delta^{-3}K^{\frac 23} {\cal E}_0^{\frac 13}\right)\\
&=CM_1^2 \exp \left(-2(d+1)\sqrt{\ep_0\mu_0}\delta K^{\frac 23} {\cal E}_0^{\frac 13}\right).
\end{aligned}
\]
Using the inequality $\exp(-y)\le 2y^{-2}$ when $y>0$, we have that
\begin{equation}\label{ineq00}
|I_0(k)|\le  \frac{CM_1^2}{K^{\frac 43} {\cal E}_0^{\frac 23}}.
\end{equation}
Next if $ {\cal E}_0 \leq 2^{\frac 34 }\delta^{-3}K$, then $k=K$ and by \eqref{varepsilon}
\begin{equation}
\label{ineq01}
|I_0(k)|=|I_0(K)|=2\varepsilon_0^2,\quad \frac{1}{K^{2}}\leq\frac{C}{K^{\frac 43}{\cal E}_0^{\frac 23}},
\end{equation}
since ${\cal E}_0\leq 2^{\frac 34 }\delta^{-3}K$.

Therefore, from \eqref{exact0} and  the Plancherel's formula we can estimate
\[
\begin{aligned}
&\|J_\ep\|^2_{(0)}(\Omega)+\|J_\mu\|^2_{(0)}(\Omega)\\
\leq&
C\|e\times\nu-\alpha h_\tau\|^2_{(0)}(\partial\Omega\times (0, \sqrt{\ep_0\mu_0}d))\le C\|e\times\nu-\alpha h_\tau\|^2_{(0)}(\partial\Omega\times\R)\\
=&C\int_{-\infty}^{\infty}\|E\times\nu(,\omega)-\alpha H_\tau(,\omega)\|_{(0)}^2(\partial\Omega)d\omega \leq C\varepsilon_0^2+\frac{CM_1^2}{K^{\frac 43}{\cal E}_0^{\frac 23}},
\end{aligned}
\]
when we consider the cases $2^{\frac 34 }\delta^{-3}K \leq {\cal E}_0$, $ {\cal E}_0 \leq 2^{\frac 34 }\delta^{-3}K$ and  use  \eqref{int0}, \eqref{apriori0} with our choice of $k$ in the both cases and the inequalities \eqref{ineq00}, \eqref{ineq01}. Since we assumed that $1< K, \varepsilon_0<\frac 12$, \eqref{stability} follows.

A proof of \eqref{stability2} can be obtained by a slight change in the previous argument. We may assume that $\varepsilon_1< \frac 12$, otherwise, \eqref{stability2} is obvious. In \eqref{int1} and \eqref{varepsilon}, we choose
\[
\left\{
\begin{aligned}
&k= \delta K^{\frac 23 } {\cal E}_1^{\frac 13}, \delta =(2\pi (d+1)\sqrt{\ep_0\mu_0})^{-\frac 13}\quad\mbox{when}\quad 2^{\frac 34}\delta^{-3}K< {\cal E}_1,\\
&k=K\quad\mbox{when}\quad {\cal E}_1 \leq 2^{\frac 34}\delta^{-3}K.
\end{aligned}\right .
\]
Now if $2^{\frac 34 }\delta^{-3}K< {\cal E}_1$, then $2^{\frac 14 }K<k$, and \eqref{I1omega}, \eqref{boundharm2} imply that
\[
\begin{aligned}
|I_1(k)| &\le CM_2^2\exp(2(d+1)\sqrt{\ep_1\mu_0}k)\exp\left(-\frac{2}{\pi}((\frac kK)^4-1)^{-\frac 12} {\cal E}_1\right)\\
&\leq CM_2^2 \exp \left(2(d+1)\sqrt{\ep_0\mu_0}k-\frac{2}{\pi}\delta \delta^{-3}K^{\frac 23} {\cal E}_1^{\frac 13}\right)\\
&=CM_2^2 \exp \left(-2(d+1)\sqrt{\ep_0\mu_0}\delta K^{\frac 23} {\cal E}_1^{\frac 13}\right),
\end{aligned}
\]
hence as above
\begin{equation}\label{ineq10}
|I_1(k)|\le  \frac{CM_2^2}{K^{\frac 43} {\cal E}_1^{\frac 23}}.
\end{equation}
If $ {\cal E}_1 \leq 2^{\frac 34 }\delta^{-3}K$, then $k=K$ and by \eqref{varepsilon}
\begin{equation}
\label{ineq11}
|I_1(k)|=|I_1(K)|=2\varepsilon_1^2,\quad \frac{1}{K^{2}}\leq\frac{C}{K^{\frac 43}{\cal E}_1^{\frac 23}},
\end{equation}
since ${\cal E}_1\leq 2^{\frac 34 }\delta^{-3}K$. Therefore, from \eqref{exact1} and  the Plancherel's formula we can estimate
\[
\begin{aligned}
&\|J_\ep\|^2_{(0)}(\Omega)+\|J_\mu\|^2_{(0)}(\Omega)
\leq
C\|e\times\nu\|^2_{(1)}(\partial\Omega\times (0, \sqrt{\ep_0\mu_0}d))\\
&\le C\int_{-\infty}^{\infty}\|E\times\nu(,\omega)\|_{(1)}^2(\partial\Omega)d\omega \leq C\varepsilon_1^2+\frac{CM_1^2}{K^{\frac 43}{\cal E}_1^{\frac 23}},
\end{aligned}
\]
when we consider the cases $2^{\frac 34 }\delta^{-3}K \leq {\cal E}_1$, $ {\cal E}_1 \leq 2^{\frac 34 }\delta^{-3}K$ and  use  \eqref{int1}, \eqref{apriori1} with our choice of $k$ in the both cases and the inequalities \eqref{ineq10}, \eqref{ineq11}. Since we assumed that $1< K, \varepsilon_1<\frac 12$, \eqref{stability2} follows.

\end{proof}

\section{Conclusion and discussion}

In this work, we study the inverse source problem for electromagnetic waves using the measurements involving tangential components of electric and magnetic fields at many frequencies. For the uniqueness, we consider a rather general setting in which the media are anisotropic and inhomogeneous. We measure the tangential components of electric and magnetic fields on a part of boundary for frequency $\omega\in (0,K)$. Under the structure assumption on the electric permittivity $\ep$ and permeability $\mu$ \eqref{structure}, the uniqueness is established for divergence-free sources. Since we use the Fourier transform in time to reduce our inverse source problem to identification of the initial data in the time-dependent Maxwell equations by data on the lateral boundary, the structure assumption  \eqref{structure} is needed to guarantee the uniqueness of the lateral Cauchy problem for time-dependent Maxwell equations. We want to point out that for the time harmonic Maxwell equations with general anisotropic media (without any structure assumption), the unique continuation property holds (see Lemma~\ref{lem22}). Proving the uniqueness of the lateral Cauchy problem for the time-dependent Maxwell equations with general anisotropic media remains an open problem. 

Our second result is the increasing stability of identifying sources using the $L^2$ norm of the absorbing boundary data $E(\,,\omega)\times\nu-\alpha H_\tau(\,,\omega)$ and the $H^1$ norm of the tangential of the electric field $E(\,,\omega)\times\nu$ for $\omega\in(0,K)$. It is tempting to prove the increasing stability using the $L^2$ norm of $E(\,,\omega)\times\nu$ for $\omega\in(0,K)$. However, since such boundary condition does not satisfy the Kreiss-Sakamoto condition, this task will be quite challenging. Finally, in the proofs of Theorem~\ref{thm1} and \ref{thm2}, it is crucial to assume that sources are the divergence-free. Due to the pedagogical example given in the introduction, it seems necessary to impose this restriction on sources. Therefore, to what extent one can determine a nondivergence-free source by boundary data at many frequencies is an interesting question.

\section*{Acknowledgements}
Isakov is supported in part by the Emylou Keith and Betty Dutcher Distinguished Professorship and the NSF grants DMS 15-14886 and
DMS 20-08154. Wang is partially supported by MOST 108-2115-M-002-002-MY3 and 109-2115-M-002-001-MY3.

\end{document}